\def\eqnarray{\stepcounter{equation}\let\@currentlabel=\theequation
\global\@eqnswtrue
\tabskip\@centering\let\\=\@eqncr
$$\halign to \displaywidth\bgroup\hfil\global\@eqcnt\z@
  $\displaystyle\tabskip\z@{##}$&\global\@eqcnt\@ne
  \hfil$\displaystyle{{}##{}}$\hfil
  &\global\@eqcnt\tw@ $\displaystyle{##}$\hfil
  \tabskip\@centering&\llap{##}\tabskip\z@\cr}
\def\endeqnarray{\@@eqncr\egroup
      \global\advance\c@equation\m@ne$$\global\@ignoretrue}
\newtheorem{theorem}{Theorem}[section]
\newtheorem{definition}[theorem]{Definition}
\newtheorem{lemma}[theorem]{Lemma}
\newtheorem{proposition}[theorem]{Proposition}
\newtheorem{remark}[theorem]{Remark}
\numberwithin{equation}{section}
\def\RR{{\mathbb{R}}}
\def\NN{{\mathbb{N}}}
\def\Om{\Omega}
\def\bOm{\overline{\Om}}
\def\pOm{\partial\Omega}
\newcommand{\norm}[2]{{\left\|#1\right\|}_{#2}}
\newcommand{\fl}[2]{(-\Delta)^#1#2}
\title[Local regularity  for the fractional Laplacian]{Local elliptic regularity for the Dirichlet fractional Laplacian}
\author{Umberto Biccari}
\address{Umberto Biccari, DeustoTech, University of Deusto, 48007 Bilbao, Basque Country, Spain.}
\address{Umberto Biccari, Facultad de Ingenier\'{\i}a, Universidad de Deusto, Avda Universidades 24, 48007 Bilbao, Basque Country, Spain.}
\email{umberto.biccari@deusto.es, u.biccari@gmail.com}
\author{Mahamadi Warma}
\address{Mahamadi Warma, University of Puerto Rico  (Rio Piedras Campus), College of Natural Sciences,
Department of Mathematics, PO Box 70377 San Juan PR
00936-8377 (USA). }
\email{mahamadi.warma1@upr.edu, mjwarma@gmail.com}
\author{Enrique Zuazua}
\address{Enrique Zuazua, DeustoTech, University of Deusto, 48007 Bilbao, Basque Country, Spain.}
\address{Enrique Zuazua, Facultad de Ingenier\'{\i}a, Universidad de Deusto, Avda Universidades 24, 48007 Bilbao, Basque Country, Spain.}
\address{Enrique Zuazua, Departamento de Matem\'aticas, Universidad Aut\'onoma de Madrid, Campus de Cantoblanco,
28049, Madrid, Spain}
\email{enrique.zuazua@deusto.es, enrique.zuazua@uam.es}
\keywords{Fractional Laplacian, Dirichlet boundary condition, weak solutions, local regularity}
\subjclass[2010]{35B65, 35R11, 35S05}
\begin{document}

\begin{abstract}
We analyze the local elliptic regularity of weak solutions to the Dirichlet problem associated with the fractional Laplacian $\fl{s}{}$ on an arbitrary bounded open set $\Omega\subset\RR^N$. For $1<p<2$, we obtain regularity in the Besov space  $B^{2s}_{p,2,\textrm{loc}}(\Omega)$, while for $2\leq p<\infty$ we show that the solutions belong to $W^{2s,p}_{\textrm{loc}}(\Omega)$. The key tool consists in analyzing carefully the elliptic equation satisfied by the solution locally, after cut-off, to later employ sharp regularity results  in the whole space. We do it by two different methods. First working directly in the variational formulation of the elliptic problem and then employing the heat kernel representation of solutions.
\end{abstract}

\maketitle

\begin{center}
{\it Dedicated to Ireneo Peral on the occasion of his 70th birthday: Gracias Ireneo por tantos a\~nos de amistad y ejemplo.}
\end{center}

\section{Introduction}\label{intro}
The aim of the present paper is to study the local elliptic regularity of weak solutions to the following Dirichlet problem
\begin{equation}\label{DP}
\begin{cases}
(-\Delta)^su=f\;\;\;&\mbox{ in }\;\Omega\\
u=0&\mbox{ on }\;\RR^N\setminus\Omega,
\end{cases}
\end{equation}
where $\Omega\subset\RR^N$ is an arbitrary bounded open set and $s\in(0,1)$. 

Here $f$ is a given distribution and $(-\Delta)^s$ denotes the fractional Laplace operator, which is defined as the following singular integral 
\begin{align}\label{fl}
(-\Delta)^su(x):=C_{N,s}\,\mbox{P.V.}\int_{\RR^N}\frac{u(x)-u(y)}{|x-y|^{N+2s}}\;dy,\;\;x\in\RR^N.
\end{align}
In \eqref{fl}, $C_{N,s}$ is a normalization constant, given by
\begin{align*}
C_{N,s}:=\frac{s2^{2s}\Gamma\left(\frac{2s+N}{2}\right)}{\pi^{\frac
N2}\Gamma(1-s)},
\end{align*}
$\Gamma $ being the usual Gamma function. Moreover, we have to mention that, for having a completely rigorous definition of the fractional Laplace operator, it is necessary to introduce also the class of functions $u$ for which computing $(-\Delta)^s u$ makes sense. We postpone this discussion to the next section.

Models involving the fractional Laplacian or other types of non-local operators have been recently used in the description of several complex phenomena for which the classical local approach turns up to be inappropriate or limited. Among others, we mention applications in elasticity (\cite{DPV}), turbulence (\cite{TURB}), anomalous transport and diffusion (\cite{BTG,AN_DIFF}), porous media flow (\cite{VAZ}), image processing (\cite{GO}), wave propagation in heterogeneous high contrast media (\cite{WAVE}). Also, it is well known that the fractional Laplacian is the generator of s-stable processes, and it is often used in stochastic models with applications, for instance, in mathematical finance (\cite{LEVI,PHAM}).  

One of the main differences between these non-local models and  classical Partial Differential Equations is  that the fulfilment of a non-local equation  at a point involves the values of the function far away from that point. 

Our concern in this article is the study of the local elliptic regularity for weak solutions of the Dirichlet problem \eqref{DP}. For this purpose, we firstly remind that, according to \cite{LPPS}, we have the following definition of weak solutions.

\begin{definition}\label{weak_sol_def-en}
Let $f\in W^{-s,2}(\bOm)$. A function $u\in W_0^{s,2}(\bOm)$ is said to be a finite energy solution of the Dirichlet problem \eqref{DP} if for every $v\in W_0^{s,2}(\bOm)$, the equality
\begin{align}\label{wek-sol-en}
\frac{C_{N,s}}{2}\int_{\RR^N}\int_{\RR^N}\frac{(u(x)-u(y))(v(x)-v(y))}{|x-y|^{N+2s}}\;dxdy=\langle f,v\rangle_{W^{-s,2}(\bOm),W_0^{s,2}(\bOm)}
\end{align}
holds.
\end{definition}
We notice that, when $1<p<2$, it is not natural to consider finite energy solutions for \eqref{DP}, and we shall rather introduce an alternative notion of solution. This will be given by duality with respect to the following class of test functions:

\begin{align*}
	\mathcal{T}(\Omega) = \Big\{\phi : \fl{s}{\phi} = \psi\;\;\textrm{ in }\;\Omega,\;\phi=0\;\;\textrm{ in }\;\RR^N\setminus\Omega,\;\psi\in C_0^{\infty}(\Omega)\Big\}.
\end{align*}

\begin{definition}\label{weak-sol-def}
Let $1<p<2$. We say that $u\in L^1(\Omega)$ is a weak solution to \eqref{DP} if, for $f\in L^1(\Omega)$ we have that
\begin{align*}
	\int_{\Omega} u\psi\,dx = \int_{\Omega}f\phi\,dx,
\end{align*} 
for any $\phi\in\mathcal{T}(\Omega)$ with $\psi\in C_0^{\infty}(\Omega)$.
\end{definition}

According to the definitions above, if $f\in L^p(\Omega)$, with  $p\ge 2$, finite energy solutions of  \eqref{DP} will be considered while, if $1<p<2$, solutions will be understood in the sense of duality/transposition. In both cases, we shall refer to them as weak solutions. Moreover, we notice that, according to Definition \ref{weak-sol-def}, duality solutions do not require that $f$ belongs to the dual space $W^{-s,2}(\bOm)$. Finally, we also notice that, if $f\in L^p(\Omega)$ with $p\geq 2$, we have the continuous embedding  $L^p(\Omega)\hookrightarrow L^2(\Omega)\hookrightarrow W^{-s,2}(\bOm)$, meaning that the property $f\in W^{-s,2}(\bOm)$ is automatically guaranteed.

\noindent The following  $W_{\rm loc}^{2s,2}(\Omega)$-regularity property is our first main result.

\begin{theorem}[\bf $L^2$-Local regularity]\label{reg-2}
Let $f\in W^{-s,2}(\bOm)$ and let $u\in W_0^{s,2}(\bOm)$ be the unique weak solution to the Dirichlet problem \eqref{DP}.  If $f\in L^2(\Omega)$, then $u\in W_{\rm loc}^{2s,2}(\Omega)$.
\end{theorem}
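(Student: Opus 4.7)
The idea is to localize by a smooth cut-off and reduce the problem to the whole-space regularity theory for $(-\Delta)^s$. Fix an arbitrary compact set $K\subset\subset\Omega$ and choose a cut-off $\eta\in C_0^\infty(\Omega)$ with $\eta\equiv 1$ in a neighborhood of $K$. Set $v:=\eta u$. Since $u\in W_0^{s,2}(\bOm)$ extends by zero to an element of $W^{s,2}(\RR^N)$ and $\eta$ is smooth and compactly supported, one verifies that $v\in W^{s,2}(\RR^N)$ has compact support inside $\Omega$. It suffices to prove $v\in W^{2s,2}(\RR^N)$, because $v|_K=u|_K$ and $K$ is arbitrary.

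To compute $(-\Delta)^s v$ pointwise, we exploit the algebraic identity
\[
\eta(x)u(x)-\eta(y)u(y)=\eta(x)\bigl(u(x)-u(y)\bigr)+u(y)\bigl(\eta(x)-\eta(y)\bigr),
\]
which decomposes the integrand of the singular integral and yields
\[
(-\Delta)^s v(x)=\eta(x)\,(-\Delta)^s u(x)+T_\eta u(x),\qquad T_\eta u(x):=C_{N,s}\,\mathrm{P.V.}\!\int_{\RR^N}\frac{u(y)\bigl(\eta(x)-\eta(y)\bigr)}{|x-y|^{N+2s}}\,dy.
\]
The first term equals $\eta f$ on $\Omega$ and vanishes on $\RR^N\setminus\mathrm{supp}\,\eta$, so it lies in $L^2(\RR^N)$. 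The argument therefore reduces to showing that the commutator-type remainder $T_\eta u$ also belongs to $L^2(\RR^N)$.

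This is the main technical step. One splits the $y$-integration into the near-diagonal region $|x-y|\leq 1$ and the far region $|x-y|>1$. On the former, the Lipschitz bound $|\eta(x)-\eta(y)|\leq\|\nabla\eta\|_\infty |x-y|$ lowers the kernel singularity to order $N+2s-1$; on the latter the kernel is dominated by $2\|\eta\|_\infty|x-y|^{-(N+2s)}$, which is integrable at infinity. When $s<1/2$ the resulting kernel is dominated by a radial $L^1$ convolution kernel, so Young's inequality against $u\in L^2(\RR^N)$ closes the estimate directly. When $s\geq 1/2$ the near-diagonal piece becomes borderline, and one must extract an extra cancellation, either by symmetrizing in $y\leftrightarrow 2x-y$ and invoking $\eta\in C^2$ through a Taylor expansion, or by appealing to the fact that $[(-\Delta)^s,\eta]$ acts as a pseudodifferential operator of order $2s-1$, hence maps $W^{s,2}(\RR^N)$ continuously into $W^{1-s,2}(\RR^N)\hookrightarrow L^2(\RR^N)$.

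Once $(-\Delta)^s v\in L^2(\RR^N)$ is in hand, the Fourier characterization $W^{2s,2}(\RR^N)=\{w\in L^2:(1+|\xi|^2)^s\hat w\in L^2\}$ combined with $v\in L^2(\RR^N)$ gives $v\in W^{2s,2}(\RR^N)$. Restricting to $K$, where $\eta\equiv 1$, yields $u\in W^{2s,2}(K)$, and the arbitrariness of $K$ concludes the proof. The principal obstacle is precisely the commutator estimate on $T_\eta u$ in the borderline range $s\geq 1/2$: one must exploit the full smoothness of $\eta$, rather than mere Lipschitz regularity, to recover the missing fractional derivative.
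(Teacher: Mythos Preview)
Your overall strategy---localize by a cut-off $\eta$, rewrite $(-\Delta)^s(\eta u)$ as $\eta f$ plus a remainder, show the remainder is in $L^2(\RR^N)$, and invoke the whole-space result---is exactly the paper's. The difference lies entirely in how the remainder $T_\eta u$ is estimated, and here your treatment is incomplete in the range $s\ge 1/2$.

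The paper does not split into cases. It rewrites your $T_\eta u$ as $u(-\Delta)^s\eta - I_s(u,\eta)$, where
\[
I_s(u,\eta)(x)=C_{N,s}\int_{\RR^N}\frac{(u(x)-u(y))(\eta(x)-\eta(y))}{|x-y|^{N+2s}}\,dy,
\]
and then applies Cauchy--Schwarz \emph{inside} the integral to factor the Gagliardo kernel:
\[
|I_s(u,\eta)(x)|\le C_{N,s}\Bigl(\int_{\RR^N}\frac{|u(x)-u(y)|^2}{|x-y|^{N+2s}}\,dy\Bigr)^{1/2}\Bigl(\int_{\RR^N}\frac{|\eta(x)-\eta(y)|^2}{|x-y|^{N+2s}}\,dy\Bigr)^{1/2}.
\]
The second factor is bounded uniformly in $x$ using only the Lipschitz continuity of $\eta$; integrating the square of the first in $x$ gives exactly $\|u\|_{W_0^{s,2}(\bOm)}^2$. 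This works for every $s\in(0,1)$ in one stroke, with no case distinction and no need for $C^2$ regularity of $\eta$.

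Your route for $s<1/2$ is valid (and in fact uses only $u\in L^2$), but for $s\ge 1/2$ you defer to either symmetrization or pseudodifferential calculus without carrying either out. The pseudodifferential argument is correct but is precisely the black box the paper is written to avoid. The symmetrization argument, as you state it, is not sufficient: Taylor-expanding $\eta$ to second order handles one piece, but the cross term still contains $u(x+z)-u(x-z)$ against a kernel of order $N+2s-1$, and bounding this in $L^2$ requires the $W^{s,2}$ regularity of $u$ (via $B^s_{2,2}\hookrightarrow B^s_{2,\infty}$), not merely the smoothness of $\eta$. So your final sentence locates the obstacle but mis-diagnoses the cure. The paper's Cauchy--Schwarz on $I_s(u,\eta)$ is the clean way to feed the $W^{s,2}$ norm of $u$ directly into the estimate; I would recommend adopting it.
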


\noindent This result can be be extended to the $L^p$ setting as follows.
 
\begin{theorem}\label{reg-p}
Let $1<p<\infty$. Given $f\in L^p(\Omega)$, let $u$ be the unique weak solution to the Dirichlet problem \eqref{DP}. Then $u\in \mathscr{L}^p_{2s,\rm loc}(\Omega)$. As a consequence we have the following result.
\begin{enumerate}
\item If $1<p<2$ and $s\ne 1/2$, then $u\in B^{2s}_{p,2,\rm loc}(\Omega)$.

\item If $1<p<2$ and $s= 1/2$, then $u\in W_{\rm loc}^{2s,p}(\Omega)=W_{\rm loc}^{1,p}(\Omega)$.	
	
\item If $2\leq p<\infty$, then $u\in W^{2s,p}_{\rm loc}(\Omega)$.
\end{enumerate}
\end{theorem}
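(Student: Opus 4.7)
The plan is to reduce local regularity to global regularity on $\RR^N$ via a cut-off argument, and then invoke the sharp $L^p$ theory for the fractional Laplacian on the whole space. Fix an open set $\Omega' \subset\subset \Omega$ where one wants to conclude, and pick $\eta \in C_c^\infty(\Omega)$ with $\eta \equiv 1$ on a neighborhood of $\overline{\Omega'}$. Setting $v := \eta u$, a direct manipulation of the singular integral in \eqref{fl} yields the Leibniz-type identity
\begin{align*}
(-\Delta)^s v(x) = \eta(x)\,(-\Delta)^s u(x) + R(x), \qquad R(x) := C_{N,s}\,\mbox{P.V.}\!\int_{\RR^N}\frac{(\eta(x)-\eta(y))\,u(y)}{|x-y|^{N+2s}}\,dy,
\end{align*}
holding on $\RR^N$. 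Since $(-\Delta)^s u = f$ in $\Omega$ and $\eta$ is supported in $\Omega$, the function $v$ satisfies the global equation $(-\Delta)^s v = \eta f + R$ on all of $\RR^N$, with $\eta f$ extended by zero off $\Omega$.

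The central step is to show that $R \in L^p(\RR^N)$. One splits the defining integral according to $|x-y|\le 1$, where the Lipschitz bound $|\eta(x)-\eta(y)|\lesssim |x-y|$ compensates the singularity $|x-y|^{-N-2s}$ down to an integrable order, and $|x-y|>1$, where the boundedness of $\eta$ and the kernel decay suffice. One also separates $x\in\operatorname{supp}\eta$ (where $R$ is dominated by an integral operator with a kernel admitting $L^p\to L^p$ bounds, acting on a function having the same integrability as $u$) from $x\notin\operatorname{supp}\eta$ (in which case $R(x)=-C_{N,s}\int_{\operatorname{supp}\eta}\eta(y)u(y)|x-y|^{-N-2s}\,dy$ decays like $|x|^{-N-2s}$ at infinity and therefore lies in every $L^q(\RR^N)$). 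When the a priori integrability of $u$ is insufficient for the near-diagonal estimate, one bootstraps using Theorem \ref{reg-2} combined with the Sobolev embedding for $W_0^{s,2}(\overline{\Omega})$; in the range $1<p<2$ the $L^1$ bound from Definition \ref{weak-sol-def} provides the required input.

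Once $\eta f + R \in L^p(\RR^N)$ is in hand, the classical $L^p$-boundedness of the Bessel potential operator $(\mathrm{Id}-\Delta)^{-s}$ on $\RR^N$ gives $v \in \mathscr{L}^p_{2s}(\RR^N)$. Because $\eta \equiv 1$ on $\Omega'$ and $\Omega'$ was arbitrary, this delivers $u \in \mathscr{L}^p_{2s,\rm loc}(\Omega)$. The three consequences (a)--(c) then follow from standard identifications of the Bessel potential scale with the Sobolev--Slobodeckij and Besov scales: for $p\ge 2$ and $2s\notin\NN$ one has $\mathscr{L}^p_{2s}(\RR^N) \hookrightarrow W^{2s,p}(\RR^N)$; for $1<p<2$ and $s\ne 1/2$ the finer embedding $\mathscr{L}^p_{2s}(\RR^N) \hookrightarrow B^{2s}_{p,2}(\RR^N)$ applies; and $\mathscr{L}^p_1(\RR^N) = W^{1,p}(\RR^N)$ takes care of $s=1/2$. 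A further cut-off transports the global inclusion to the local statement.

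The main obstacle I anticipate is the $L^p$-control of the non-local remainder $R$. The kernel $|x-y|^{-N-2s}$ is only mildly integrable, so the estimate has to exploit simultaneously the cancellation $\eta(x)-\eta(y)$ near the diagonal, the kernel decay away from it, and the right integrability of $u$ supplied by Theorem \ref{reg-2} and Sobolev embedding in the regime $p\ge 2$, or by the duality formulation when $1<p<2$. Making these three ingredients cooperate uniformly across the regimes $p<2$, $p=2$, $p>2$ and across the critical value $s=1/2$ is the technical heart of the argument.
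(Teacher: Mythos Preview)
Your high-level strategy coincides with the paper's: localize via a cut-off $\eta$, show that $(-\Delta)^s(\eta u)=\eta f+R$ on $\RR^N$ with $R\in L^p(\RR^N)$, and then invoke Theorem~\ref{re-r-N}. Your remainder $R(x)=C_{N,s}\,\mathrm{P.V.}\!\int (\eta(x)-\eta(y))u(y)\,|x-y|^{-N-2s}\,dy$ is exactly the paper's $g=u(-\Delta)^s\eta-I_s(u,\eta)$, just written in unsplit form.

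The gap is in your near-diagonal estimate. After the Lipschitz bound $|\eta(x)-\eta(y)|\lesssim|x-y|$ the kernel becomes $|x-y|^{-(N+2s-1)}$, and this is \emph{not} locally integrable when $s\ge 1/2$; hence the phrase ``compensates the singularity down to an integrable order'' fails in that range, and the resulting integral operator does not admit $L^p\to L^p$ bounds for any choice of exponents (the exponent $N+2s-1\ge N$ rules out both Young's inequality and Hardy--Littlewood--Sobolev). Your proposed remedy, bootstrapping via higher \emph{integrability} of $u$ from Theorem~\ref{reg-2} and Sobolev embedding, cannot close this gap either: no $L^q$ control of $u$ alone suffices, because the obstruction is the order of the singularity, not the size of $u$.

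What is actually needed---and what the paper exploits---is local \emph{fractional Sobolev} regularity of $u$. The paper splits $R=u(-\Delta)^s\eta-I_s(u,\eta)$; the first term is handled by $(-\Delta)^s\eta\in L^\infty$, and for $I_s$ one applies H\"older to the bilinear integrand, obtaining
\[
|I_s(u,\eta)(x)|\le C\Bigl(\int\frac{|u(x)-u(y)|^q}{|x-y|^{N+sq}}\,dy\Bigr)^{1/q}\Bigl(\int\frac{|\eta(x)-\eta(y)|^{q'}}{|x-y|^{N+sq'}}\,dy\Bigr)^{1/q'},
\]
so that the $L^q$ bound on $I_s$ reduces to $\|u\|_{W^{s,q}(\omega_2)}$ on a slightly larger set $\omega_2\Subset\Omega$, not merely $\|u\|_{L^q}$. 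For $p\ge 2$ this $W^{s,q}_{\mathrm{loc}}$ information is obtained by first applying Theorem~\ref{reg-2} to get $\eta u\in W^{2s,2}(\RR^N)$ and then embedding $W^{2s,2}\hookrightarrow W^{s,2N/(N-2s)}$, followed by an iteration in $q$; for $1<p<2$ one uses $W_0^{s,2}(\bOm)\hookrightarrow W^{s,p}(\Omega)$ on the bounded set. In short, the missing idea in your sketch is that the differences $u(x)-u(y)$ must absorb part of the singularity, and this is precisely why the decomposition $R=u(-\Delta)^s\eta-I_s(u,\eta)$ and the $W^{s,q}$ seminorm of $u$ enter the argument.
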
 

In Theorems \ref{reg-2} and \ref{reg-p}, $W_0^{s,2}(\bOm)$ denotes the fractional order Sobolev space which consists of all functions $u\in W^{s,2}(\RR^N)$ which are zero on $\RR^N\setminus\Omega$, while $W^{-s,2}(\bOm)$ is its dual. We will give a more exhaustive description of these spaces in Appendix \ref{appendix} at the end of this paper. Moreover, with $\mathscr{L}^p_{2s,\rm loc}(\Omega)$ we indicate the potential space 
\begin{align}\label{sp-stein-loc}
	\mathscr{L}^p_{2s,\rm loc}(\Omega):=\Big\{ u\in L^p(\Omega): u\eta \in \mathscr{L}_{2s}^p(\RR^N) \;\textrm{ for any test function $\eta\in\mathcal{D}(\Omega)$} \Big\}.
\end{align}
Analogously, with $B^{2s}_{p,2,\rm loc}(\Omega)$ we indicate the Besov space
\begin{align}\label{besov-loc}
	B^{2s}_{p,2,\rm loc}(\Omega):=\Big\{ u\in L^p(\Omega): u\eta \in B^{2s}_{p,2}(\RR^N) \;\textrm{ for any test function $\eta\in\mathcal{D}(\Omega)$} \Big\}.
\end{align}

Finally, for the definitions of the Besov and potential spaces $B^{2s}_{p,2}(\RR^N)$ and $\mathscr{L}_{2s}^p(\RR^N)$ we refer again to Appendix \ref{appendix}.

We have to notice that Theorems \ref{reg-2} and \ref{reg-p} are already known when $\Omega$ is the whole space $\RR^N$. In fact they follow by combining several results on Fourier transform and singular integrals contained in \cite[Chapter V]{STEIN}. Moreover, our results complement some  previous ones on local and global Sobolev regularity. 
\begin{itemize}
	\item In \cite[Theorem 17]{LPPS}, Leonori, Peral, Primo and Soria show that, if $f\in L^m(\Omega)$ for some $m\ge \frac{2N}{N+2s}$, then the weak solution $u$ of \eqref{DP} belongs to $W_0^{s\theta,p}(\Omega)$ for some $0<\theta<1$ and $p$ such that 
	\begin{align*}
		\frac{1}{p} = \frac{1}{m}+\theta\left(\frac{1}{2}-\frac{1}{m}\right)-\frac{2s(1-\theta)}{N}.
	\end{align*}	
	This is proved by means of an interpolation argument between $W^{s,2}(\Omega)$ and $L^{\frac{mN}{N-2ms}}(\Omega)$. Note however that this global regularity result does not achieve the   maximal gain of regularity since $0<s\theta<s$.
On the other hand, a well-known example shows that the optimal global regularity fails (for more details, see \cite[Remark 7.2]{RS-ES}). 
	\item In \cite{cozzi}, it is proved that if we take $f\in L^2(\Omega)$, then the corresponding weak solution of \eqref{DP} satisfies $u\in W^{2s-\varepsilon,2}_{\textrm{loc}}(\Omega)$ for all $\varepsilon >0$.
\end{itemize}

We also mention that similar results were obtained using pseudo-differential calculus (see, e.g., \cite[Section 7]{grubb} or \cite[Chapter XI, Theorem 2.5 and Exercise 2.1]{taylor}). In particular, in \cite[Section 7]{grubb}, Grubb proved that, for all $1<p<\infty$, the assumption $f\in W^{\tau,p}(\Omega)$ for some real number $\tau\geq 0$ implies that the corresponding solution $u$ of \eqref{DP} belongs to $\mathscr{L}^p_{\tau+2s,\rm loc}(\Omega)$. 

In this article we will resent an alternative approach to the proof of Theorems \ref{reg-2} and \ref{reg-p}, which complements the pseudo-differential one, using merely classical PDE techniques in the context of linear and nonlinear  elliptic and parabolic equations.  

Finally, we remind that, for the classical Laplace operator, maximum  regularity holds globally provided that the open set is smooth enough.  This result fails for the fractional Laplace operator. We refer to Section \ref{open-pb} for a full discussion on this topic and the possible remedies that should involve weighted estimates to take into account the boundary singularities. 

The strategy that we will employ to prove our local regularity theorems does not involve neither interpolation techniques, as in the proof of \cite[Theorem 17]{LPPS}, nor the theory of pseudo-differential operators, as in \cite{grubb}. Instead, it will be based on a cut-off argument that will allow us to reduce the problem to the whole space case, for which, as we have mentioned above, the result is already known (see for example Theorem \ref{re-r-N} below). 

In order to develop this technique, the following proposition, which provides a formula for the fractional Laplacian of the product of two functions, will be fundamental  (see e.g. \cite{ROS} and the references therein).

\begin{proposition}\label{prop-prod-formu}
Let $u$ and $v$ be such that $(-\Delta)^su$ and $(-\Delta)^sv$ exist  and
\begin{align}\label{exit}
\int_{\RR^N}\frac{|(u(x)-u(y))(v(x)-v(y))|}{|x-y|^{N+2s}}\;dy<\infty.
\end{align}
Then $(-\Delta)^s(uv)$ exists and is given by
\begin{align}\label{prod-formu}
(-\Delta)^s(uv)=u(-\Delta)^sv+v(-\Delta)^su-I_s(u,v),
\end{align}
where
\begin{align}\label{Is-formula}
I_s(u,v)(x):=C_{N,s}\int_{\RR^N}\frac{(u(x)-u(y))(v(x)-v(y))}{|x-y|^{N+2s}}\;dy,\;\;x\in\RR^N.
\end{align}
\end{proposition}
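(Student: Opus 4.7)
The plan is to prove the identity by a purely algebraic decomposition of the numerator appearing in the singular integral defining $(-\Delta)^s(uv)$, combined with careful bookkeeping of which pieces converge absolutely and which must be interpreted as principal values.

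First I would start from the definition
\begin{align*}
(-\Delta)^s(uv)(x) = C_{N,s}\,\mathrm{P.V.}\int_{\RR^N}\frac{u(x)v(x)-u(y)v(y)}{|x-y|^{N+2s}}\,dy
\end{align*}
and use the elementary pointwise identity
\begin{align*}
u(x)v(x) - u(y)v(y) = u(x)\bigl(v(x)-v(y)\bigr) + v(x)\bigl(u(x)-u(y)\bigr) - \bigl(u(x)-u(y)\bigr)\bigl(v(x)-v(y)\bigr),
\end{align*}
which is readily verified by expansion. Dividing by $|x-y|^{N+2s}$ splits the integrand into three pieces.

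Next I would justify treating these three pieces separately. The hypothesis \eqref{exit} ensures that the third piece is absolutely integrable in $y$ for the given $x$, so by definition it contributes exactly $-C_{N,s}^{-1}I_s(u,v)(x)$ and requires no principal value. For the remaining two terms, since $u(x)$ and $v(x)$ are constants in the $y$-integration they can be pulled out, leaving the principal value integrals that by hypothesis define $(-\Delta)^sv(x)$ and $(-\Delta)^su(x)$ respectively. Combining the three contributions yields \eqref{prod-formu}.

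The only subtle point — and the one I expect to require the most care — is the legitimacy of decomposing the P.V.\ integral into the sum of three separate integrals. To handle this rigorously I would work with the symmetric truncation $\{y : |x-y|>\varepsilon\}$, apply the algebraic identity under the truncated integral (where everything is absolutely convergent thanks to the integrability of $|x-y|^{-N-2s}$ away from the singularity together with mild decay of $u,v$ that is implicit in the existence of $(-\Delta)^su,(-\Delta)^sv$), and then pass to the limit $\varepsilon \to 0^+$. For the first two terms the limit exists by the assumed existence of $(-\Delta)^sv(x)$ and $(-\Delta)^su(x)$, while for the third term the dominated convergence theorem applies thanks to assumption \eqref{exit}. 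This yields the identity \eqref{prod-formu} pointwise, with all integrals making sense in the proper (principal value or absolute) manner.
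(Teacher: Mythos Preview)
Your argument is correct and is the standard proof of this product formula. The algebraic identity you use is the right one, and the device of working with the truncated integral over $\{|x-y|>\varepsilon\}$, splitting there (where all three pieces are absolutely integrable), and then passing to the limit handles the principal-value issue cleanly: the first two pieces converge by the assumed existence of $(-\Delta)^sv(x)$ and $(-\Delta)^su(x)$, and the third converges by dominated convergence thanks to hypothesis~\eqref{exit}. Since the sum of the three truncated pieces equals the truncated integral defining $(-\Delta)_\varepsilon^s(uv)(x)$, the limit of the latter exists as well, which is precisely the statement that $(-\Delta)^s(uv)(x)$ exists.

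As for the comparison you were asked about: the paper does not actually supply a proof of this proposition. It is stated with a reference (``see e.g.\ \cite{ROS} and the references therein'') and then used as a tool in the proofs of Theorems~\ref{reg-2} and~\ref{reg-p}. Your proof is exactly the argument one finds in the cited literature, so there is nothing to contrast.
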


\begin{remark}\label{rem-exit}
{\em We mention that for example  if $u,v\in W^{s,2}(\RR^N)$ with  $(-\Delta)^su, (-\Delta)^sv\in L^2(\RR^N)$, then one has \eqref{exit} and thus formula \eqref{prod-formu} holds for such functions.}
\end{remark}

Formula \eqref{prod-formu}, applied to the product of $u$ with a cut-off function $\eta$, will be the principal tool for transforming our original problem \eqref{DP} to one in the whole $\RR^N$. Then, for the proof of our main results, we will need to carefully analyze the regularity of the remainder term $I_s$. 

This analysis will be developed following two different approaches. In the first one, we will consider the fractional Laplacian $(-\Delta)^s$ as defined in \eqref{fl}. In the second one, we will instead use the equivalent characterization of the fractional Laplace operator through the heat semigroup $(e^{t\Delta})_{t\ge 0}$, given by
\begin{align}\label{fl_heat}
	(-\Delta)^s u:=\frac{1}{\Gamma(-s)}\int_0^{+\infty} \Big(e^{t\Delta} u - u\Big)\frac{dt}{t^{1+s}},
\end{align} (see for instance  \cite[Section 2.1]{stinga} and the references therein). We recall that here, $\Gamma(1-s):=-s\Gamma(-s)$.

We mention that this careful analysis of the regularity of the remainder term had been partially developed already in \cite[Lemma B1]{biccari}, as a technical tool for obtaining the results therein presented. This has been one of the main motivations that led to the development of the present work.    

\noindent Finally, we also mention that our techniques and results extend to the following parabolic problem  
\begin{align}\label{parabolic}
	\begin{cases}
	u_t+\fl{s}{u} = f & \textrm{ in }\Omega\times(0,T),
	\\
	u = 0 & \textrm{ on }(\RR^N\setminus\Omega)\times(0,T),
	\\
	u(\cdot,0)=0 & \textrm{ in }\Omega.
	\end{cases}
\end{align}
In particular, we have:
\begin{theorem}\label{reg_thm_parabolic}
Let $1<p<\infty$. Given $f\in L^p(\Omega\times(0,T))$, let $u$ be the unique weak solution to the parabolic problem \eqref{parabolic}. Then $u\in L^p\big((0,T);\mathscr{L}^p_{2s,\rm loc}(\Omega)\big)$. As a consequence we have the following result.
\begin{enumerate}
\item If $1<p<2$ and $s\ne 1/2$, then $u\in L^p\big((0,T);B^{2s}_{p,2,\rm loc}(\Omega)\big)$.
	
\item If $1<p<2$ and $s=1/2$, then $u\in L^p\big((0,T);W^{2s,p}_{\rm loc}(\Omega)\big)=L^p\big((0,T);W^{1,p}_{\rm loc}(\Omega)\big)$.
	
\item If $2\leq p<\infty$, then $u\in L^p\big((0,T);W^{2s,p}_{\rm loc}(\Omega)\big)$.
\end{enumerate}

\end{theorem}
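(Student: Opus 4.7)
The plan is to adapt the cut-off strategy developed for the elliptic Theorems \ref{reg-2} and \ref{reg-p} to the parabolic problem \eqref{parabolic}. Fix an arbitrary test function $\eta\in\mathcal{D}(\Omega)$ and set $w(x,t):=u(x,t)\eta(x)$. Since $\eta$ is compactly supported in $\Omega$, $w$ vanishes outside a compact subset of $\Omega$ for every $t\in(0,T)$, so it can be regarded as a function defined on the whole $\RR^N$; the vanishing initial condition $u(\cdot,0)=0$ moreover yields $w(\cdot,0)=0$. The idea is to derive the PDE satisfied by $w$ on $\RR^N\times(0,T)$ and then invoke the sharp maximal regularity available for the fractional heat equation in the whole space.

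Since $\eta$ is time-independent we have $w_t=\eta u_t$, and Proposition \ref{prop-prod-formu} applied pointwise in $t$ (which is legitimate, cf.\ Remark \ref{rem-exit}) gives
\begin{align*}
	w_t+(-\Delta)^s w=\eta u_t+\eta(-\Delta)^s u+u(-\Delta)^s\eta-I_s(u,\eta)=\eta f+u(-\Delta)^s\eta-I_s(u,\eta)=:F
\end{align*}
in $\RR^N\times(0,T)$, with zero initial datum, where the second equality uses the equation solved by $u$. The next step is to verify $F\in L^p(\RR^N\times(0,T))$. The term $\eta f$ lies trivially in $L^p$. Since $\eta\in C_c^\infty(\Omega)$, $(-\Delta)^s\eta$ is smooth, bounded, and decays like $|x|^{-N-2s}$ at infinity, hence belongs to $L^\infty(\RR^N)\cap L^1(\RR^N)$; combined with the a priori $L^p(\Omega\times(0,T))$ bound on $u$ coming from well-posedness of \eqref{parabolic}, this gives $u(-\Delta)^s\eta\in L^p(\RR^N\times(0,T))$.

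The delicate ingredient is the control of $I_s(u,\eta)$. I would reuse the estimates on the commutator already developed in the proof of Theorem \ref{reg-p}, applied slice-wise in the time variable, to obtain an inequality of the form $\|I_s(u(\cdot,t),\eta)\|_{L^p(\RR^N)}\leq C(\eta,\Omega)\|u(\cdot,t)\|_{L^p(\Omega)}$; this is achieved by splitting the integral defining $I_s$ into a near-diagonal piece, where the smoothness of $\eta$ is exploited via a first-order Taylor expansion and the boundedness of a truncated Riesz-type operator on $L^p$, and a far-field piece, where the kernel $|x-y|^{-N-2s}$ provides integrability against $u(\cdot,t)$ (extended by zero outside $\Omega$). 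Raising to the $p$-th power and integrating over $(0,T)$ produces $I_s(u,\eta)\in L^p(\RR^N\times(0,T))$, hence $F\in L^p(\RR^N\times(0,T))$.

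Finally, the classical maximal $L^p$-regularity for the fractional heat equation $w_t+(-\Delta)^s w=F$ with zero initial data in the whole space, which can be obtained via Mikhlin multiplier estimates on the semigroup representation \eqref{fl_heat}, yields $w\in L^p\big((0,T);\mathscr{L}^p_{2s}(\RR^N)\big)$. Since $\eta\in\mathcal{D}(\Omega)$ is arbitrary, definition \eqref{sp-stein-loc} gives $u\in L^p\big((0,T);\mathscr{L}^p_{2s,\rm loc}(\Omega)\big)$, and the three alternatives in (a)--(c) follow from the standard embeddings of $\mathscr{L}^p_{2s}(\RR^N)$ into $B^{2s}_{p,2}(\RR^N)$ or $W^{2s,p}(\RR^N)$ recalled in Appendix \ref{appendix}, together with \eqref{besov-loc}. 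The main obstacle, exactly as in the elliptic case, is the $L^p$ estimate on the commutator $I_s(u,\eta)$ across the whole range $1<p<\infty$; in the duality range $1<p<2$ one must additionally justify that the formulation of Definition \ref{weak-sol-def} (suitably adapted in time) propagates through the cut-off manipulation, which is carried out by approximating $f$ by smooth data, performing the computations above on the corresponding smooth solutions, and passing to the limit using the $L^p$ bounds just derived.
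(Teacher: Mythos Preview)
The overall architecture—multiply by a cut-off, identify the equation satisfied by $w=\eta u$ on $\RR^N\times(0,T)$, and invoke maximal $L^p$-regularity for the fractional heat equation in the whole space—mirrors the elliptic strategy and is the right plan. The gap is in your commutator estimate. You assert
\[
\|I_s(u(\cdot,t),\eta)\|_{L^p(\RR^N)}\le C(\eta,\Omega)\,\|u(\cdot,t)\|_{L^p(\Omega)},
\]
claiming that a first-order Taylor expansion of $\eta$ reduces the near-diagonal piece to a truncated Riesz-type operator bounded on $L^p$. This is only valid for $s\le\tfrac12$. After expanding $\eta(x)-\eta(y)=\nabla\eta(x)\cdot(x-y)+O(|x-y|^2)$ and using odd symmetry to kill the $u(x)$ contribution, the leading near-diagonal term is
\[
-\nabla\eta(x)\cdot\mathrm{P.V.}\int_{|x-y|<1}\frac{u(y)\,(x-y)}{|x-y|^{N+2s}}\,dy,
\]
whose kernel has local singularity $|z|^{-(N+2s-1)}$. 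For $s>\tfrac12$ this is strictly more singular than a Calder\'on--Zygmund kernel and the corresponding operator is \emph{not} bounded on $L^p$; equivalently, the commutator $[(-\Delta)^s,\eta]$ has order $2s-1>0$. This is exactly why the paper's own estimate \eqref{norm-f-p} carries $\|u\|_{W^{s,p}(\omega_2)}$ on the right-hand side rather than $\|u\|_{L^p(\Omega)}$—the ``estimates already developed in the proof of Theorem~\ref{reg-p}'' that you invoke do \emph{not} give the pure $L^p$ bound you claim.

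The remedy is to transplant the bootstrap of Step~2 of the proof of Theorem~\ref{reg-p} to the time-dependent setting. Start from the energy regularity $u\in L^2\big((0,T);W_0^{s,2}(\bOm)\big)$ for \eqref{parabolic}; apply \eqref{norm-f} slice-wise in $t$ to get $F\in L^2(\RR^N\times(0,T))$, hence $\eta u\in L^2\big((0,T);W^{2s,2}(\RR^N)\big)$ by maximal $L^2$-regularity; then use $W^{2s,2}\hookrightarrow W^{s,q}$ and a parabolic analogue of Lemma~\ref{lem-reg-p} to feed $\|u(\cdot,t)\|_{W^{s,q}(\omega_2)}+\|u(\cdot,t)\|_{L^q(\Omega)}$ into \eqref{norm-f-p}, iterating in $q$ until $q=p$. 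For $1<p<2$ no bootstrap is needed, since $L^2\big((0,T);W_0^{s,2}(\bOm)\big)\hookrightarrow L^p\big((0,T);W^{s,p}(\Omega)\big)$ on the bounded space-time cylinder and \eqref{norm-f-p} applies directly. Note that the paper does not prove Theorem~\ref{reg_thm_parabolic} here but defers to \cite{fl_parabolic}; your outline is essentially correct once the commutator bound is repaired in this way.
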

\noindent We refer to \cite{fl_parabolic} for more details on this topic.

The paper is organized as follows. In Section \ref{preliminaries}, we present some preliminary tools that we shall use in the proof of our main results. In Section \ref{local-reg-sec}, we give the proof of Theorems \ref{reg-2} and \ref{reg-p} using the integral representation of the fractional Laplacian. In Section \ref{fl-heat-sec}, we use the second approach which is based on the representation \eqref{fl_heat}. Finally, in Section \ref{open-pb}, we present some open problems and perspectives that are closely related to our work. 

\section{Preliminaries}\label{preliminaries}
In this Section, we introduce some preliminary result that will be useful for the proof of our main Theorems \ref{reg-2} and \ref{reg-p}.

We start by giving a more rigorous definition of the fractional Laplace operator, as we have anticipated in Section \ref{intro}. Let 
\begin{align*}
\mathcal L_s^{1}(\RR^N):=\left\{u:\RR^N\to\RR\;\mbox{ measurable},\; \int_{\RR^N}\frac{|u(x)|}{(1+|x|)^{N+2s}}\;dx<\infty\right\}.
\end{align*}
For $u\in \mathcal L_s^{1}(\RR^N)$ and $\varepsilon>0$ we set
\begin{align*}
(-\Delta)_\varepsilon^s u(x):= C_{N,s}\int_{\{y\in\RR^N:\;|x-y|>\varepsilon\}}\frac{u(x)-u(y)}{|x-y|^{N+2s}}\;dy,\;\;x\in\RR^N.
\end{align*}

\noindent The {\bf fractional Laplace operator}  $(-\Delta)^s$ is then defined by the following singular integral:
\begin{align}\label{fl_def}
(-\Delta)^su(x)=C_{N,s}\,\mbox{P.V.}\int_{\RR^N}\frac{u(x)-u(y)}{|x-y|^{N+2s}}\;dy=\lim_{\varepsilon\downarrow 0}(-\Delta)_\varepsilon^s u(x),\;\;x\in\RR^N,
\end{align}
provided that the limit exists. 

We notice that if $0<s<1/2$ and $u$ is smooth, for example bounded and Lipschitz continuous on $\RR^N$, then the integral in \eqref{fl_def} is in fact not really singular near $x$ (see e.g. \cite[Remark 3.1]{NPV}). Moreover,  $\mathcal L_s^{1}(\RR^N)$ is the right space for which $ v:=(-\Delta)_\varepsilon^s u$ exists for every $\varepsilon>0$, $v$ being also continuous at the continuity points of  $u$. 

The following result of existence and uniqueness of weak solutions to the Dirichlet problem \eqref{DP} is by now well known (see e.g. \cite[Theorem 12]{LPPS}).

\begin{proposition}\label{prop-ex}
Let $\Omega\subset\RR^N$ be an arbitrary bounded open set and $0<s<1$. Then for every $f\in W^{-s,2}(\bOm)$, the Dirichlet problem \eqref{DP} has a unique finite energy solution $u\in W_0^{s,2}(\bOm)$. In addition, there exists a constant $C>0$ such that
\begin{align}\label{est-sol}
\|u\|_{W_0^{s,2}(\bOm)}\le C\|f\|_{W^{-s,2}(\bOm)}.
\end{align}
\end{proposition}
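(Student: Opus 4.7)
The plan is to obtain Proposition \ref{prop-ex} as a direct application of the Lax--Milgram theorem to the bilinear form
\begin{align*}
a(u,v):=\frac{C_{N,s}}{2}\int_{\RR^N}\int_{\RR^N}\frac{(u(x)-u(y))(v(x)-v(y))}{|x-y|^{N+2s}}\,dx\,dy,\qquad u,v\in W_0^{s,2}(\bOm),
\end{align*}
and the continuous linear functional $v\mapsto \langle f,v\rangle_{W^{-s,2}(\bOm),W_0^{s,2}(\bOm)}$. The starting point is to recall that, because $\Omega$ is bounded and functions in $W_0^{s,2}(\bOm)$ are extended by zero outside $\Omega$, the Gagliardo seminorm
\begin{align*}
[u]_{s,2}^{2}:=\int_{\RR^N}\int_{\RR^N}\frac{|u(x)-u(y)|^2}{|x-y|^{N+2s}}\,dx\,dy
\end{align*}
is in fact (up to a constant depending only on $N,s,\Omega$) an equivalent norm on $W_0^{s,2}(\bOm)$. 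This requires a fractional Poincar\'e inequality of the type $\|u\|_{L^2(\Omega)}\le C(\Omega,N,s)\,[u]_{s,2}$ for all $u\in W_0^{s,2}(\bOm)$, which is standard and follows either from the fractional Sobolev embedding combined with $|\Omega|<\infty$ or from a direct argument using that $u$ vanishes on the complement of a bounded set.

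First I would verify boundedness of $a$: by Cauchy--Schwarz applied to the measure $|x-y|^{-N-2s}dx\,dy$ we immediately get $|a(u,v)|\le \tfrac{C_{N,s}}{2}[u]_{s,2}\,[v]_{s,2}\le C\|u\|_{W_0^{s,2}(\bOm)}\|v\|_{W_0^{s,2}(\bOm)}$. Coercivity then reads $a(u,u)=\tfrac{C_{N,s}}{2}[u]_{s,2}^{2}\ge \alpha\|u\|_{W_0^{s,2}(\bOm)}^{2}$ for some $\alpha>0$, which is precisely the content of the Poincar\'e inequality recalled above. The linear functional $v\mapsto \langle f,v\rangle$ is bounded by $\|f\|_{W^{-s,2}(\bOm)}\|v\|_{W_0^{s,2}(\bOm)}$ by definition of the dual norm.

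With these three ingredients in hand, the Lax--Milgram lemma furnishes a unique $u\in W_0^{s,2}(\bOm)$ satisfying $a(u,v)=\langle f,v\rangle$ for every $v\in W_0^{s,2}(\bOm)$, which is exactly the identity \eqref{wek-sol-en} defining a finite energy solution. Testing with $v=u$ and using coercivity together with the bound on the right-hand side yields $\alpha\|u\|_{W_0^{s,2}(\bOm)}^{2}\le \|f\|_{W^{-s,2}(\bOm)}\|u\|_{W_0^{s,2}(\bOm)}$, from which the a priori estimate \eqref{est-sol} with $C=\alpha^{-1}$ follows.

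The only point that is not entirely mechanical is the Poincar\'e inequality on $W_0^{s,2}(\bOm)$ for a merely bounded (not necessarily Lipschitz) open set; I would either invoke it from Appendix \ref{appendix} or prove it by enclosing $\Omega$ in a ball $B_R$, extending by zero, and using the continuous embedding $W^{s,2}(\RR^N)\hookrightarrow L^2_{\mathrm{loc}}(\RR^N)$ together with the bounded support of $u$ to absorb the $L^2$ norm into the seminorm. Everything else is a textbook verification of the Lax--Milgram hypotheses.
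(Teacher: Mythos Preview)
Your proposal is correct and follows essentially the same route as the paper: the paper also applies Lax--Milgram to the form $\mathcal E(u,v)=a(u,v)$, asserting continuity and coercivity, and then tests with $v=u$ to derive \eqref{est-sol}. The only difference is cosmetic---the paper invokes the equivalent norm \eqref{equi-nrom} from the Appendix (which is exactly the fractional Poincar\'e inequality you single out) rather than spelling out the Cauchy--Schwarz step for boundedness.
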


\begin{proof}
For the sake of completeness we include the proof. We recall that a complete description of the functional setting in which we are working is presented in the Appendix \ref{appendix}.
Moreover, we recall also that, according to Definition \ref{weak_sol_def-en}, a function $u\in W_0^{s,2}(\bOm)$ is said to be a weak solution of the Dirichlet problem \eqref{DP} if for every $v\in W_0^{s,2}(\bOm)$, the equality
\begin{align}\label{wek-sol}
\frac{C_{N,s}}{2}\int_{\RR^N}\int_{\RR^N}\frac{(u(x)-u(y))(v(x)-v(y))}{|x-y|^{N+2s}}\;dxdy=\langle f,v\rangle_{W^{-s,2}(\bOm),W_0^{s,2}(\bOm)}
\end{align}
holds. Hence, given $u,v\in W_0^{s,2}(\bOm)$ let us consider the bilinear form
\begin{align}\label{form}
\mathcal E(u,v)=\frac{C_{N,s}}{2}\int_{\RR^N}\int_{\RR^N}\frac{(u(x)-u(y))(v(x)-v(y))}{|x-y|^{N+2s}}\;dxdy,
\end{align}
which is symmetric, continuous and coercive.  

Thus by the classical Lax-Milgram Theorem, for every $f\in (W_0^{s,2}(\bOm))^\star=:W^{-s,2}(\bOm)$, there exists a unique $u\in W_0^{s,2}(\bOm)$ such that the equality \eqref{wek-sol} holds for every $v\in W_0^{s,2}(\bOm)$. We have shown that \eqref{DP} has a unique weak solution $u\in W_0^{s,2}(\bOm)$. Taking $v=u$ as a test function in \eqref{wek-sol} and using \eqref{equi-nrom} we get that
\begin{align*}
C\|u\|_{W_0^{s,2}(\bOm)}^2=\langle f,u\rangle_{W^{-s,2}(\bOm),W_0^{s,2}(\bOm)}\le \|f\|_{W^{-s,2}(\bOm)}\|u\|_{W_0^{s,2}(\bOm)}.
\end{align*}
We have shown \eqref{est-sol} and the proof is finished.
\end{proof}

\begin{remark}
Notice that also for $1<p<2$ existence and uniqueness of a weak solution to problem \eqref{DP} are guaranteed by \cite[Theorem 23]{LPPS}.	
\end{remark}

\begin{remark}
Notice that \cite[Theorem 12]{LPPS} holds for a more general non-local operator where the kernel $|x-y|^{-N-2s}$ is replaced by a general symmetric kernel $K(x,y)$ satisfying $\lambda\le K(x,y)|x-y|^{N+2s}\le \lambda^{-1}$ for all $(x,y)\in\RR^N\times\RR^N$, $x\ne y$,  and for some constant $0<\lambda\le 1$.
\end{remark}

\begin{remark}
Let $f\in W^{-s,2}(\bOm)$ and let $u\in W_0^{s,2}(\bOm)$ be the weak solution of the Dirichlet problem \eqref{DP}. We notice that it follows from the Sobolev embeddings \eqref{sob-emb2} and \eqref{sob-emb1} that if $N<2s$, then $u\in C^{0,s-\frac N2}(\bOm)$ and if $N=2s$, then $u\in L^q(\Omega)$ for every $1\le q<\infty$.
\end{remark}

The following Lemma, giving a precise $L^q$-regularity of weak solutions and complementing the results in \cite[Theorem 16]{LPPS}, will be useful in the sequel.

\begin{lemma}\label{lem-reg-p}
Assume that $N>2s$ and let $f\in L^p(\Omega)$ for some $p\ge \frac{2N}{N+2s}$. Then \eqref{DP} has a unique weak solution $u$. In addition the following assertions hold.
\begin{enumerate}
\item  If $p>\frac{N}{2s}$, then $u\in L^\infty(\Omega)$ and there exists a constant $C>0$ such that
\begin{align*}
\|u\|_{L^\infty(\Omega)}\le C\|f\|_{L^p(\Omega)}.
\end{align*}
\item If  $\frac{2N}{N+2s}\le p\le \frac{N}{2s}$, then $u\in L^q(\Omega)$ for every $q$ satisfying $p\le q<\frac{Np}{N-2sp}$ and there exists a constant $C>0$ such that
\begin{align*}
\|u\|_{L^q(\Omega)}\le C\|f\|_{L^p(\Omega)}.
\end{align*}
\end{enumerate}
\end{lemma}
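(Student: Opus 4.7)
My plan is to combine a dualizable Sobolev embedding for existence with Stampacchia- and Moser-type iterations for the integrability estimates.

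First, observe that for $p\ge 2N/(N+2s)$ one has $L^p(\Omega)\hookrightarrow L^{2N/(N+2s)}(\Omega)\hookrightarrow W^{-s,2}(\bOm)$ by dualizing the Sobolev embedding $W_0^{s,2}(\bOm)\hookrightarrow L^{2N/(N-2s)}(\Omega)$ (valid since $N>2s$). Proposition \ref{prop-ex} then provides a unique finite-energy solution $u\in W_0^{s,2}(\bOm)\subset L^{2N/(N-2s)}(\Omega)$, which handles existence, uniqueness, and simultaneously gives the base integrability needed for what follows.

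For case (a) I would apply Stampacchia's truncation method. Using $G_k(u):=\mathrm{sgn}(u)(|u|-k)^+\in W_0^{s,2}(\bOm)$ as a test function in \eqref{wek-sol-en}, the pointwise monotonicity inequality
\[
\bigl(u(x)-u(y)\bigr)\bigl(G_k(u)(x)-G_k(u)(y)\bigr)\ge \bigl(G_k(u)(x)-G_k(u)(y)\bigr)^2,
\]
combined with Hölder's inequality and the fractional Sobolev embedding applied to $G_k(u)$, leads to an estimate
\[
|A(h)|\le \frac{C\,\|f\|_{L^p(\Omega)}^{2N/(N-2s)}}{(h-k)^{2N/(N-2s)}}\,|A(k)|^{\alpha},\qquad h>k>0,
\]
for $A(k):=\{x\in\Omega:|u(x)|>k\}$ with $\alpha=2N/\bigl((N-2s)p'\bigr)-1$. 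A direct computation shows $\alpha>1$ precisely when $p>N/(2s)$, so the classical Stampacchia iteration lemma yields $\|u\|_{L^\infty(\Omega)}\le C\|f\|_{L^p(\Omega)}$.

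For case (b) I would bootstrap via a Moser-type scheme. Testing \eqref{wek-sol-en} with $\phi_M:=|T_M u|^{q-2}T_M u$, where $T_M$ truncates at level $M$, the nonlocal Stroock--Varopoulos inequality
\[
\bigl(u(x)-u(y)\bigr)\bigl(\phi_M(x)-\phi_M(y)\bigr)\ge c_q\bigl(|T_M u|^{q/2}(x)-|T_M u|^{q/2}(y)\bigr)^2,
\]
together with the Sobolev embedding applied to $|T_M u|^{q/2}$, Hölder's inequality, and monotone convergence as $M\to\infty$, delivers
\[
\|u\|_{L^{qN/(N-2s)}(\Omega)}^{q}\le C\,\|f\|_{L^p(\Omega)}\,\|u\|_{L^{(q-1)p'}(\Omega)}^{q-1}.
\]
Starting from $r_0=2N/(N-2s)$ and iterating the map $r\mapsto (r/p'+1)\cdot N/(N-2s)$, the iterates $r_n$ increase monotonically toward the attractive fixed point $Np/(N-2sp)$ but never reach it, yielding $u\in L^q(\Omega)$ with the desired estimate for every $q<Np/(N-2sp)$.

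The main obstacle is the rigorous handling of the nonlinear test functions $G_k(u)$ and $\phi_M$ in the nonlocal framework: one has to verify that they are admissible in $W_0^{s,2}(\bOm)$ and to establish the associated Stroock--Varopoulos pointwise inequalities. These ingredients are by now standard in the fractional literature, but their careful deployment---together with the verification that the Moser iteration genuinely exhausts the full range $q<Np/(N-2sp)$ and not just a proper subinterval of it---constitutes the technical heart of the argument.
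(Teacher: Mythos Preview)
Your argument is essentially correct, but it follows a genuinely different route from the paper's proof. The paper does not use any nonlinear test functions, truncations, or Moser iteration. Instead, it exploits the semigroup $(e^{-tA_D})_{t\ge 0}$ generated by the Dirichlet realization $A_D$ of $(-\Delta)^s$: the key analytic input is the ultracontractivity estimate
\[
\|e^{-tA_D}f\|_{L^m(\Omega)}\le C\,e^{-\lambda_1\bigl(\tfrac1r-\tfrac1m\bigr)}\,t^{-\tfrac{N}{2s}\bigl(\tfrac1r-\tfrac1m\bigr)}\|f\|_{L^r(\Omega)},\qquad 1\le r\le m\le\infty,
\]
together with the resolvent representation $u=A_D^{-1}f=\int_0^\infty e^{-tA_D}f\,dt$. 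Both (a) and (b) then reduce to checking when $\int_0^\infty e^{-\lambda_1(1/p-1/m)}t^{-\frac{N}{2s}(1/p-1/m)}\,dt$ is finite, which gives exactly the thresholds $p>N/(2s)$ for $m=\infty$ and $m<Np/(N-2sp)$ otherwise. The paper in fact remarks that (b) had previously been obtained in \cite{LPPS} precisely by the Moser-type scheme you describe, and presents its semigroup proof as a new alternative.

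What each approach buys: your Stampacchia/Moser route stays entirely within the variational formulation \eqref{wek-sol-en} and needs only the Sobolev embedding plus the pointwise Stroock--Varopoulos inequalities, so it is self-contained and robust under perturbations of the kernel. The paper's semigroup argument is shorter and avoids the bootstrap altogether---both cases follow from a single integral---but it requires as black boxes the submarkovian property, ultracontractivity, and the spectral gap $\lambda_1>0$, which are established elsewhere. Your technical caveats about admissibility of $G_k(u)$ and $\phi_M$ in $W_0^{s,2}(\bOm)$ and about the iteration exhausting the full interval are real but standard; with those filled in, your proof is complete.
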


\begin{proof}
Let $f\in L^p(\Omega)$ for some $p\ge \frac{2N}{N+2s}$. Since $W_0^{s,2}(\bOm)\hookrightarrow L^{\frac{2N}{N-2s}}(\Omega)$ (see \eqref{sob-emb1}), we have that $L^p(\Omega)\hookrightarrow L^{\frac{2N}{N+2s}}(\Omega)\hookrightarrow W^{-s,2}(\bOm)$. Thus \eqref{DP} has a unique weak solution $u$.

Let $\mathcal E$ with domain $D(\mathcal E)=W_0^{s,2}(\bOm)$ be the bilinear, symmetric,  continuous and coercive form defined in \eqref{form}. As we have shown in the proof of Proposition \ref{prop-ex}, for every $f\in W^{-s,2}(\bOm)$  there exists a unique $u\in W_0^{s,2}(\bOm)$ such that
\begin{align*}
\mathcal E(u,v)=\langle f,v\rangle_{W^{-s,2}(\bOm),W_0^{s,2}(\bOm)}, \quad \forall v\in W_0^{s,2}(\bOm).
\end{align*}

This defines an operator $\mathcal A: W_0^{s,2}(\bOm)\to W^{-s,2}(\bOm)$ which is continuous and coercive. Let $A_D$ be the part of $\mathcal A$ in $L^2(\Omega)$, in the sense that
\begin{align*}
D(A_D):=\Big\{u\in W_0^{s,2}(\bOm):\;\mathcal Au\in L^2(\Omega)\Big\},\;A_Du=\mathcal Au.
\end{align*}
Using an integration by parts argument one can show that $A_D$ is given precisely by
\begin{align*}
D(A_D)=\Big\{u\in W_0^{s,2}(\bOm):\; (-\Delta)^su\in L^2(\Omega)\Big\},\;\;A_D=(-\Delta)^su.
\end{align*}

Then $A_D$ is the realization in $L^2(\Omega)$ of the operator $(-\Delta)^s$ with the Dirichlet boundary condition $u=0$ on $\RR^N\setminus\Omega$. 
The operator $A_D$ has a compact resolvent (this follows from the compactness of the embedding from $W_0^{s,2}(\bOm)$ into $L^2(\Omega)$, see \eqref{sob-emb1}) and its first eigenvalue $\lambda_1>0$. 

In addition  $-A_D$ generates a submarkovian strongly continuous semigroup $(e^{-tA_D})_{t\ge 0}$ which is also ultracontractive in the sense that the semigroup maps $L^r(\Omega)$ into $L^m(\Omega)$ for every $t>0$ and $1\le r\le m\le\infty$. More precisely, following line by line the proof of \cite[Theorem 2.16]{GW-CPDE} by using the appropriate estimates, we get that, for every $1\le r\le m\le\infty$ there exists a constant $C>0$ such that for every $f\in L^r(\Omega)$ and $t>0$,

\begin{align}\label{ultra-cont}
\|e^{-tA_D}f\|_{L^m(\Omega)}\le Ce^{-\lambda_1\left(\frac 1r-\frac 1m\right)}t^{-\frac{N}{2s}\left(\frac 1r-\frac 1m\right)}\|f\|_{L^r(\Omega)}.
\end{align}

Since the operator $A_D$ is invertible, it follows from the abstract result in \cite[Theorem 1.10, p.55]{EN} that for every $f\in L^1(\Omega)\cap W^{-s,2}(\bOm)$, the unique solution $u$ of the Dirichlet problem \eqref{DP} is given by
\begin{align*}
u=A_D^{-1}f=\int_0^\infty e^{-tA_D}f\;dt.
\end{align*}

(a) Assume that $p>\frac{N}{2s}$. Then applying \eqref{ultra-cont} with $r=p$ and $m=\infty$, we get that
\begin{align*}
\|u\|_{L^\infty\Omega)}\le& \int_0^\infty\|e^{-tA_D}f\|_{L^\infty(\Omega)}\;dt\le C\int_0^\infty e^{-\frac{\lambda_1}{p}}t^{-\frac{N}{2sp}}\|f\|_{L^p(\Omega)}\;dt\\
= &C\left(\int_1^\infty e^{-\frac{\lambda_1}{p}}t^{-\frac{N}{2sp}}\;dt+\int_0^1 e^{-\frac{\lambda_1}{p}}t^{-\frac{N}{2sp}}\;dt\right)\|f\|_{L^p(\Omega)}.
\end{align*}

The first integral in the right hand side of the previous estimate is always finite. The second integral will be finite if $1-\frac{N}{2sp}>0$. This is equivalent to $p>\frac{N}{2s}$ and we have shown part (a).

(b) Finally assume that $\frac{2N}{N+2s}\le p\le \frac{N}{2s}$ and let $p\le q<\frac{Np}{N-2sp}$.
Then applying \eqref{ultra-cont} with $r=p$ and $m=q$ we get that
\begin{align*}
\|u\|_{L^q(\Omega)}\le& \int_0^\infty\|e^{-tA_D}f\|_{L^q(\Omega)}\;dt\le C\int_0^\infty e^{-\lambda_1\left(\frac 1p-\frac 1q\right)}t^{-\frac{N}{2s}\left(\frac 1p-\frac 1q\right)}\|f\|_{L^r(\Omega)}\;dt\\
= &C\left(\int_1^\infty e^{-\lambda_1\left(\frac 1p-\frac 1q\right)}t^{-\frac{N}{2s}\left(\frac 1p-\frac 1q \right)}dt+\int_0^1 e^{-\lambda_1\left(\frac 1p-\frac 1q\right)}t^{-\frac{N}{2s}\left(\frac 1p-\frac 1q\right)}\;dt\right)\|f\|_{L^p(\Omega)}.
\end{align*}

As above, the first integral is always finite and the second integral will be finite if $1-\frac{N}{2s}\left(\frac 1p-\frac 1q\right)>0$. This is equivalent to $q<\frac{Np}{N-2sp}$. We have shown part (b) and the proof is finished.
\end{proof}

\begin{remark}
Assertion (b) has been also proved in \cite[Theorem 16]{LPPS}. There, the authors obtained the result  adapting  Moser's method in \cite{MOSER}, which allows to obtain the $L^q(\Omega)$ regularity of the solution $u$ to \eqref{DP} employing functions depending nonlinearly on the solution.   To the best of our knowledge, our approach to the proof of Lemma \ref{lem-reg-p}(b) is new.
\end{remark}

\noindent In our discussion, the following result of regularity on the whole space $\RR^N$ will play an important role.

\begin{theorem}\label{re-r-N}
Let $1<p<\infty$. Given $F\in L^p(\RR^N)$, let $u$ be the unique weak solution to the fractional Poisson type equation
\begin{equation}\label{PE}
	\fl{s}{u}=F\;\;\mbox{ in }\;\RR^N.
\end{equation}
Then $u\in \mathscr{L}^p_{2s}(\RR^N)$. As a consequence we have the following. 
\begin{enumerate}
\item If $1<p<2$ and $s\ne 1/2$, then $u\in B^{2s}_{p,2}(\RR^N)$.
\item If $1<p<2$ and $s= 1/2$, then $u\in W^{2s,p}(\RR^N)=W^{1,p}(\RR^N)$.
\item If $2\leq p<\infty$, then $u\in W^{2s,p}(\RR^N)$.
\end{enumerate}
\end{theorem}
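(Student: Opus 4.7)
My plan is to reduce Theorem \ref{re-r-N} to classical Fourier multiplier results: once rephrased in frequency variables, the claim amounts to the fact that $(-\Delta)^s$ and the Bessel operator $(I-\Delta)^s$ differ only by an $L^p$-bounded multiplier. Introducing
\[
m(\xi) := \frac{(1+|\xi|^2)^s}{1+|\xi|^{2s}},\qquad \xi\in\RR^N,
\]
one has $(1+|\xi|^2)^s = m(\xi)\,(1+|\xi|^{2s})$, so at the operator level
\[
(I-\Delta)^s u \;=\; m(D)\bigl(u + (-\Delta)^s u\bigr) \;=\; m(D)\,(u+F).
\]
Since $u,F\in L^p(\RR^N)$, the boundedness of $m(D)$ on $L^p$ would yield $(I-\Delta)^s u\in L^p(\RR^N)$, which is precisely the defining condition for $u\in\mathscr{L}^p_{2s}(\RR^N)$.

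The first step is therefore to verify the Mihlin--H\"ormander condition for $m$. The symbol is uniformly bounded on $\RR^N$ and tends to $1$ at both the origin and infinity. A Taylor expansion gives
\[
1 - m(\xi) = O\bigl(|\xi|^{\min(2,2s)}\bigr)\quad\text{as } |\xi|\to 0,
\]
and the partial derivatives of $m$ behave like $|\partial^\alpha m(\xi)|\lesssim |\xi|^{\min(2,2s)-|\alpha|}$ near the origin, and decay at infinity. Multiplying by $|\xi|^{|\alpha|}$ shows that $|\xi|^{|\alpha|}|\partial^\alpha m(\xi)|$ stays uniformly bounded on $\RR^N\setminus\{0\}$ for every multi-index $\alpha$, so Mihlin's theorem (see \cite[Chapter V]{STEIN}) gives the boundedness of $m(D)$ on $L^p(\RR^N)$ for every $1<p<\infty$. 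This establishes the main claim $u\in\mathscr{L}^p_{2s}(\RR^N)$.

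With this in hand, the three refined statements follow from standard embeddings between potential, Besov and Sobolev spaces. For $1<p\leq 2$ one has the continuous inclusion $\mathscr{L}^p_{2s}(\RR^N)\hookrightarrow B^{2s}_{p,2}(\RR^N)$, which immediately gives case (1) when $s\neq 1/2$. When $s=1/2$ the index $2s=1$ is an integer and the Calder\'on--Zygmund characterization of Sobolev spaces yields $\mathscr{L}^p_1(\RR^N)=W^{1,p}(\RR^N)$ for every $1<p<\infty$, covering case (2). Finally, for $2\leq p<\infty$ and $2s$ non-integer one combines $\mathscr{L}^p_{2s}\hookrightarrow B^{2s}_{p,p}$ with the identification $B^{2s}_{p,p}=W^{2s,p}$ valid in that range; for $s=1/2$ the integer identification once again applies. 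This produces case (3). All embeddings invoked here are contained in \cite[Chapter V]{STEIN}.

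The main obstacle is precisely the Mihlin verification. The difficulty is that for $s<1$ neither $|\xi|^{2s}$ nor $m(\xi)$ is smooth at the origin; what makes the argument close is that the non-smoothness of $m$ near zero is of order $|\xi|^{\min(2,2s)}$, which is just enough so that each factor $|\xi|^{|\alpha|}$ in the Mihlin bound absorbs the $|\xi|^{-|\alpha|}$ blow-up of $\partial^\alpha m$. Once this is in place, the rest is a translation of definitions in Stein's potential-space calculus.
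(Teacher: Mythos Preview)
Your argument and the paper's coincide on the three consequences (a)--(c): both invoke the embeddings $\mathscr{L}^p_{2s}\hookrightarrow B^{2s}_{p,2}$ for $1<p\le 2$, the identification $\mathscr{L}^p_1=W^{1,p}$, and $\mathscr{L}^p_{2s}\hookrightarrow B^{2s}_{p,p}=W^{2s,p}$ for $p\ge 2$, all from \cite[Chapter V]{STEIN}. The paper in fact offers nothing beyond these citations; it treats Theorem \ref{re-r-N} as a classical fact.

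Where you go further is in the main claim $u\in\mathscr{L}^p_{2s}(\RR^N)$. In the paper's appendix this space is \emph{defined} as $\{u\in L^p:(-\Delta)^su\in L^p\}$, so under that convention the claim reduces to $u\in L^p$. You instead establish $(I-\Delta)^su\in L^p$ via the Mihlin multiplier $m(\xi)=(1+|\xi|^2)^s/(1+|\xi|^{2s})$, which is Stein's actual definition of $\mathscr{L}^p_{2s}$ and the one his embedding theorems are stated for; the verification you sketch is correct (near $\xi=0$ one has $|\xi|^{|\alpha|}|\partial^\alpha m(\xi)|\lesssim|\xi|^{2s}\to 0$), and supplying it is a genuine gain in rigor over the paper.

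One residual point shared with the paper: you write ``since $u,F\in L^p(\RR^N)$'', but the hypothesis only gives $F\in L^p$, and on all of $\RR^N$ neither $u\in L^p$ nor uniqueness of a weak solution is automatic without a decay or growth constraint. In every application the paper makes of this theorem, $u$ is a cut-off $\eta v$ with compact support and $u\in L^p$ is already in hand, so the gap is cosmetic there; still, it would be cleaner to record $u\in L^p$ as part of the hypothesis rather than as something to be proved.
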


Theorem \ref{re-r-N} is a classical result whose proof can be done by combining several results on singular integrals and Fourier transform contained in \cite[Chapter V]{STEIN}. In particular:

\begin{itemize}
\item If $1<p<2$ and $s\ne 1/2$, then the result follows from \cite[Chapter V, Section 5.3, Theorem 5(B)]{STEIN}, which provides the inclusion $\mathscr{L}^p_{2s}(\RR^N)\subset B_{2s}^{p,2}(\RR^N)$. Moreover, an explicit counterexample showing that sharper inclusions are not possible has been given in  \cite[Chapter V, Section 6.8]{STEIN}. 

\item If $1<p<2$ and $s= 1/2$, then  applying \cite[Chapter V, Section 3.3, Theorem 3]{STEIN} we have $\mathscr{L}^p_{2s}(\RR^N)=\mathscr{L}^p_{1}(\RR^N)=W^{1,p}(\RR^N)$.

\item If $2\leq p<\infty$, then \cite[Chapter V, Section 5.3, Theorem 5(A)]{STEIN} yields $u\in B_{2s}^{p,p}(\RR^N)$ and this latter space, by definition, coincides with $W^{2s,p}(\RR^N)$  (see, e.g., \cite[Chapter V, Section 5.1, Formula (60)]{STEIN}).
\end{itemize}

\section{Proof of Theorems \ref{reg-2} and \ref{reg-p}: first approach}\label{local-reg-sec}
\subsection{Proof of the $L^2$-local regularity theorem}

\begin{proof}[Proof of Theorem \ref{reg-2}]

As we have mentioned above, our strategy is based on a cut-off argument that will allow us to show that the solutions of the fractional Dirichlet problem in $\Omega$, after cut-off, are solutions of the elliptic problem on the whole space $\RR^N$, for which Theorem \ref{re-r-N} holds. For this purpose, given $\omega$ and $\widetilde\omega$ two open subsets of the domain $\Omega$ such that $\widetilde\omega\Subset\omega\Subset\Omega$, we introduce a cut-off function $\eta\in \mathcal D(\omega)$ such that
\begin{equation}\label{eta}
\begin{cases}
\eta(x)\equiv 1\;\;\;&\mbox{ if }\; x\in\widetilde\omega\\
0\le \eta(x)\le 1&\mbox{ if }\; x\in\omega\setminus\widetilde\omega\\
\eta(x)=0&\mbox{ if }\; x\in\RR^N\setminus\omega.
\end{cases}
\end{equation}

\noindent Let $f\in W^{-s,2}(\bOm)$ and let $u\in W_0^{s,2}(\bOm)$ be the unique weak solution to the Dirichlet problem \eqref{DP}.  

Let $\omega$ and $\eta\in\mathcal D(\omega)$ be respectively the set and the cut-off function constructed in \eqref{eta}. We consider the function $u\eta$. It is clear that $u\eta\in W^{s,2}(\RR^N)$. It follows from Proposition \ref{prop-prod-formu} and Remark \ref{rem-exit} that
\begin{align}\label{for-del-prod}
(-\Delta)^s(u\eta)-\eta(-\Delta)^su=u(-\Delta)^s\eta  -I_s(u,\eta).
\end{align}
Let
\begin{align*}
g:=u(-\Delta)^s\eta  -I_s(u,\eta).
\end{align*}
We claim that $g\in L^2(\RR^N)$. In fact there exists a constant  $C>0$, independent of $u$,  such that
\begin{align}\label{norm-f}
\|g\|_{L^2(\RR^N)}\le C\|u\|_{W_0^{s,2}(\bOm)}.
\end{align}
Since $u=0$ on $\RR^N\setminus\Omega$ and $(-\Delta)^s\eta\in L^\infty(\RR^N)$, we have that
\begin{align}\label{est1}
\|u(-\Delta)^s\eta\|_{L^2(\RR^N)}^2=\int_{\Omega}|u(-\Delta)^s\eta|^2\;dx\le \|(-\Delta)^s\eta\|_{L^\infty(\Omega)}^2\|u\|_{L^2(\Omega)}^2.
\end{align}
Now, recall from \eqref{Is-formula} that for a.e. $x\in\RR^N$,
\begin{align*}
I_s(u,\eta)(x):=&C_{N,s}\int_{\RR^N}\frac{(u(x)-u(y))(\eta(x)-\eta(y))}{|x-y|^{N+2s}}\;dy\\
=&C_{N,s}\int_{\Omega}\frac{(u(x)-u(y))(\eta(x)-\eta(y))}{|x-y|^{N+2s}}\;dy\\
&+C_{N,s}\eta(x)\int_{\RR^N\setminus\Omega}\frac{u(x)-u(y)}{|x-y|^{N+2s}}\;dy =\mathbb I_1(x)+\mathbb I_2(x),
\end{align*}
where we have set
\begin{align*}
\mathbb I_1(x):=C_{N,s}\int_{\Omega}\frac{(u(x)-u(y))(\eta(x)-\eta(y))}{|x-y|^{N+2s}}\;dy,\;\;x\in\RR^N,
\end{align*}
and
\begin{align*}
\mathbb I_2(x):=C_{N,s}\eta(x)\int_{\RR^N\setminus\Omega}\frac{u(x)-u(y)}{|x-y|^{N+2s}}\;dy,\;\;x\in\RR^N.
\end{align*}
Let us start to estimate the term $\mathbb I_1(x)$. Using the Cauchy-Schwarz inequality, we get that
\begin{align}\label{CS1}
|\mathbb I_1(x)|\le C_{N,s}\left(\int_{\Omega}\frac{|u(x)-u(y)|^2}{|x-y|^{N+2s}}\;dy\right)^{\frac 12}\left(\int_{\Omega}\frac{(|\eta(x)-\eta(y)|^2}{|x-y|^{N+2s}}\;dy\right)^{\frac 12}.
\end{align}

Let $x\in\Omega$ be fixed and $R>0$ such that $\Omega\subset B(x,R)$. Since $\eta$ is a smooth function (in particular Lipschitz continuous on $\RR^N$), we have that there exists a constant $C>0$ (depending on $\eta$) such that
\begin{align*}
\int_{\Omega}\frac{|\eta(x)-\eta(y)|^2}{|x-y|^{N+2s}}\;dy\le C\int_{\Omega}\frac{dy}{|x-y|^{N+2s-2}}\le C\int_{B(x,R)}\frac{dy}{|x-y|^{N+2s-2}}\le C.
\end{align*}
Using the preceding estimate and \eqref{CS1} we get that
\begin{align}\label{I1}
\int_{\RR^N}|\mathbb I_1(x)|^2\;dx\le& C\int_{\RR^N} \int_{\Omega}\frac{|u(x)-u(y)|^2}{|x-y|^{N+2s}}\;dydx
\le  C\|u\|_{W_0^{s,2}(\bOm)}^2.
\end{align}

Concerning the term $\mathbb I_2$, we notice that $\mathbb I_2=0$ on $\RR^N\setminus\omega$. 
In addition, using the Cauchy-Schwarz inequality we get that
\begin{align}\label{E1}
|\mathbb I_2(x)|^2&\le C_{N,s}^2\int_{\RR^N\setminus\Omega}\frac{\eta^2(x)dy}{|x-y|^{N+2s}}\int_{\RR^N\setminus\Omega}\frac{|u(x)-u(y)|^2}{|x-y|^{N+2s}}\;dy.
\end{align}
For any $y\in \RR^N\setminus\Omega$, we have that
\begin{align*}
\frac{\eta^2(x)}{|x-y|^{N+2s}}=\frac{\chi_{\overline{\omega}}(x)\eta^2(x)}{|x-y|^{N+2s}}\le \chi_{\overline{\omega}}(x)\eta^2(x)\sup_{x\in\overline{\omega}}\frac{1}{|x-y|^{N+2s}}.
\end{align*}
Thus there exists a constant $C>0$ such that
\begin{align}\label{E2}
\int_{\RR^N\setminus\Omega}\frac{\eta^2(x)dy}{|x-y|^{N+2s}}\le \chi_{\overline{\omega}}(x)\eta^2(x)\int_{\RR^N\setminus\Omega}\frac{dy}{\mbox{dist}(y,\partial\overline{\omega})^{N+2s}}\le C\chi_{\overline{\omega}}(x)\eta^2(x),
\end{align}
where we have used that the integral is finite which follows from the facts that $\mbox{dist}(\pOm,\partial\overline{\omega})\ge\delta>0$, that the distance function $\mbox{dist}(y,\partial\overline{\omega})$ grows linearly as $y$ tends to infinity and that $N+2s>N$.  

\noindent Since $\chi_{\overline{\omega}}\eta^2\in L^\infty(\omega)$, and using \eqref{E1} and \eqref{E2}, we get that there exists a constant $C>0$ such that
\begin{align}\label{I2}
\int_{\RR^N}|\mathbb I_2(x)|^2\;dx=\int_{\omega}|\mathbb I_2(x)|^2\;dx\le C\int_{\omega}\int_{\RR^N\setminus\Omega}\frac{|u(x)-u(y)|^2}{|x-y|^{N+2s}}\;dydy
\le &C\|u\|_{W_0^{s,2}(\bOm)}^2.
\end{align}

Now estimate \eqref{norm-f} follows from \eqref{est1}, \eqref{I1}, \eqref{I2} and the claim is proved. We have shown that $\eta u$ is a weak solution to the Poisson Equation \eqref{PE} with $F$ given by $F=\eta(-\Delta)^su+g\in L^2(\RR^N)$. It follows from Theorem \ref{re-r-N} that $(\eta u)\in W^{2s,2}(\RR^N)$. Thus $u\in W_{\rm loc}^{2s,2}(\Omega)$ and the proof is complete.
\end{proof}

\subsection{Proof of the $L^p$-local regularity theorem}
We will now use Theorem \ref{reg-2} to prove our local regularity result in the general $L^p$ setting.

\begin{proof}[Proof of Theorem \ref{reg-p}]
We start by noticing that, assuming $f\in L^p(\Omega)\cap W^{-s,2}(\bOm)$, we have that \eqref{DP} has a unique weak solution $u\in W_0^{s,2}(\bOm)$. We divide the proof into two steps. 
\subsubsection*{Step 1. $1<p<2$} If $1<p<2$, then, according to Theorem \ref{theo-sob-emb}, $u\in W^{s,p}(\Omega)$. In particular, $u\in L^p(\Omega)$. 

Let $\omega$ and $\eta\in\mathcal D(\omega)$ be respectively the set and the cut-off function constructed in \eqref{eta}. We consider the function $u\eta\in W^{s,p}(\RR^N)$. As in the proof of Theorem \ref{reg-2},  we have that $(-\Delta)^s(u\eta)$ is given by 
\begin{align*}
	\fl{s}{u\eta} = \eta f + u\fl{s}{\eta} - I_s(u,\eta),
\end{align*}
where the term $I_s(u,\eta)$ has been introduced in \eqref{Is-formula}. Let $\omega_1, \omega_2$ be open sets such that
\begin{align}\label{omega12}
\overline{\omega}\subset\omega_1\subset\overline{\omega}_1\subset\omega_2
\subset\overline{\omega}_2\subset\Omega. 
\end{align} 

Since the function $\eta$ and the set $\omega$ in \eqref{eta} are arbitrary, it follows that  $u\in W^{s,p}(\omega_2)$. Thus we have $u\in W^{s,p}(\omega_2)\cap L^p(\Omega)$. Let
\begin{align*}
g:=u(-\Delta)^s\eta  -I_s(u,\eta).
\end{align*}
We now claim that $g\in L^p(\RR^N)$ and there exists a constant $C>0$ such that
\begin{align}\label{norm-f-p}
\|g\|_{L^p(\RR^N)}\le C\left(\|u\|_{W^{s,p}(\omega_2)}+\|u\|_{L^p(\Omega)}\right).
\end{align}
Indeed, it is clear that $g$ is defined on all $\RR^N$. Moreover
\begin{align}\label{est1-p}
\|u(-\Delta)^s\eta\|_{L^p(\RR^N)}^p=\int_{\Omega}|u(-\Delta)^s\eta|^p\;dx\le \|(-\Delta)^s\eta\|_{L^\infty(\Omega)}^p\|u\|_{L^p(\Omega)}^p.
\end{align}
For estimating the term $I_s$, we use the decomposition
\begin{align*}
I_s(u,\eta)(x):=&C_{N,s}\int_{\RR^N}\frac{(u(x)-u(y))(\eta(x)-\eta(y))}{|x-y|^{N+2s}}\;dy\\
=&C_{N,s}\int_{\omega_1}\frac{(u(x)-u(y))(\eta(x)-\eta(y))}{|x-y|^{N+2s}}\;dy\\
&+C_{N,s}\eta(x)\int_{\RR^N\setminus\omega_1}\frac{u(x)-u(y)}{|x-y|^{N+2s}}\;dy =\mathbb I_1(x)+\mathbb I_2(x),\;\;x\in\RR^N,
\end{align*}
where we have set
\begin{align*}
\mathbb I_1(x):=C_{N,s}\int_{\omega_1}\frac{(u(x)-u(y))(\eta(x)-\eta(y))}{|x-y|^{N+2s}}\;dy,\;\;x\in\RR^N,
\end{align*}
and
\begin{align*}
\mathbb I_2(x):=C_{N,s}\eta(x)\int_{\RR^N\setminus\omega_1}\frac{u(x)-u(y)}{|x-y|^{N+2s}}\;dy,\;\;x\in\RR^N.
\end{align*}
Let $p':=\frac{p}{p-1}$. Using the H\"older inequality, we get that for a.e. $x\in\RR^N$,
\begin{align}\label{CS1-p}
|\mathbb I_1(x)|\le C_{N,s}\left(\int_{\omega_1}\frac{|u(x)-u(y)|^p}{|x-y|^{N+sp}}\;dy\right)^{\frac 1p}\left(\int_{\omega_1}\frac{|\eta(x)-\eta(y)|^{p'}}{|x-y|^{N+sp'}}\;dy\right)^{\frac 1{p'}}.
\end{align}

Let $x\in\omega_1$ be fixed and $R>0$ such that $\omega_1\subset B(x,R)$. Using the Lipschitz continuity of the function $\eta$, we obtain that there exists constant $C>0$ such that
\begin{align}\label{CS2-p}
\int_{\omega_1}\frac{|\eta(x)-\eta(y)|^{p'}}{|x-y|^{N+sp'}}\;dy\le C\int_{\omega_1}\frac{dy}{|x-y|^{N+sp'-p'}}\le C\int_{B(x,R)}\frac{dy}{|x-y|^{N+sp'-p'}}\le C.
\end{align}
Now, using \eqref{CS1-p}, \eqref{CS2-p} and \eqref{ine-dist}, we get 
\begin{align}\label{I1-p}
\int_{\RR^N}|\mathbb I_1(x)|^p\;dx\le& C\left(\int_{\omega_2} \int_{\omega_1}\frac{|u(x)-u(y)|^p}{|x-y|^{N+sp}}\;dydx+\int_{\RR^N\setminus\omega_2} \int_{\omega_1}\frac{|u(x)-u(y)|^p}{|x-y|^{N+sp}}\;dydx\right)\notag\\
\le &C\left(\|u\|_{W^{s,p}(\omega_2)}^p+\int_{\RR^N\setminus\omega_2} \int_{\omega_1}\frac{|u(x)|^p+|u(y)|^p}{(1+|x|)^{N+sp}}\;dydx\right)\notag\\
\le &C\left(\|u\|_{W^{s,p}(\omega_2)}^p+\|u\|_{L^p(\Omega)}^p\right) ,
\end{align}
where we have also used that $u=0$ on $\RR^N\setminus\Omega$.
Recall that $\mathbb I_2=0$ on $\RR^N\setminus\omega$. 
Then using the H\"older inequality, we get that 
\begin{align}\label{E1-1}
|\mathbb I_2(x)|^p\le C\left(\int_{\RR^N\setminus\omega_1}\frac{\eta^{p'}(x)dy}{|x-y|^{N+sp'}}\right)^{p-1}\int_{\RR^N\setminus\omega_1}\frac{|u(x)-u(y)|^p}{|x-y|^{N+sp}}\;dy.
\end{align}
For any $y\in \RR^N\setminus\omega_1$, we have that
\begin{align*}
\frac{\eta^{p'}(x)}{|x-y|^{N+sp'}}=\frac{\chi_{\overline{\omega}}(x)\eta^{p'}(x)}{|x-y|^{N+sp'}}\le \chi_{\overline{\omega}}(x)\eta^{p'}(x)\sup_{x\in\overline{\omega}}\frac{1}{|x-y|^{N+sp'}}.
\end{align*}
So there exists a constant $C>0$ such that
\begin{align}\label{E2-2}
\int_{\RR^N\setminus\omega_1}\frac{\eta^{p'}(x)dy}{|x-y|^{N+sp'}}\le \chi_{\overline{\omega}}(x)\eta^{p'}(x)\int_{\RR^N\setminus\omega_1}\frac{dy}{\mbox{dist}(y,\partial\overline{\omega})^{N+sp'}}\le C\chi_{\overline{\omega}}(x)\eta^{p'}(x).
\end{align}

In \eqref{E2-2} we have also used that the integral is finite which follows from the fact that $\mbox{dist}(\partial\omega_1,\partial\overline{\omega})\ge\delta>0$ together with the fact that $\mbox{dist}(y,\partial\overline{\omega})$ grows linearly as $y$ tends to infinity and $N+sp'>N$.

Since $\chi_{\overline{\omega}}\eta^{p'}\in L^\infty(\omega)$, and using \eqref{E1-1}, \eqref{E2-2} and \eqref{ine-dist}, we also get that there exists a constant $C>0$ such that
\begin{align}\label{I2-p}
	\int_{\RR^N}|\mathbb I_2(x)|^p\;dx=&\int_{\omega}|\mathbb I_2(x)|^p\;dx\le C\int_{\omega}\int_{\RR^N\setminus\omega_1}\frac{|u(x)-u(y)|^p}{|x-y|^{N+sp}}\;dydx\notag
	\\
	\le &C\int_{\omega}\int_{\RR^N\setminus\omega_1}\frac{|u(x)|^p+|u(y)|^p}{(1+|y|)^{N+sp}}\;dydx\le C\|u\|_{L^p(\Omega)}^p,
\end{align}
where we have used again that $u=0$ on $\RR^N\setminus\Omega$.
Estimate \eqref{norm-f-p} follows from \eqref{est1-p}, \eqref{I1-p}, \eqref{I2-p} and we have shown the claim. We therefore proved that $\eta u$ is a weak solution to the Poisson equation \eqref{PE} with $F$ given by $F=\eta f+g$. Since $F\in L^p(\RR^N)$, it follows from Theorem \ref{re-r-N} that $\eta u\in \mathscr{L}^p_{2s}(\RR^N)$. We have shown that $u\in \mathscr{L}^p_{2s,\rm loc}(\Omega)$. As a consequence we have the following results. 
\begin{enumerate}
\item If $s\ne 1/2$, then $\eta u\in B^{2s}_{p,2}(\RR^N)$, hence $u\in B^{2s}_{p,2,\rm loc}(\Omega)$.
\item If $s= 1/2$, then $\eta u\in W^{2s,p}(\RR^N)=W^{1,p}(\RR^N)$, hence $u\in W^{2s,p}_{\textrm{loc}}(\Omega)=W^{1,p}_{\textrm{loc}}(\Omega)$.
\end{enumerate}

\noindent The proof for $1<p<2$ is concluded.

\subsubsection*{Step 2. $p\geq 2$}
Let $f\in W^{-s,2}(\bOm)$ and let $u\in W_0^{s,2}(\bOm)$ be the weak solution to the Dirichlet problem \eqref{DP}.  Let $\omega$ and $\eta\in\mathcal D(\omega)$ be respectively the set and the cut-off function constructed in \eqref{eta}. We consider the function $u\eta\in W^{s,2}(\RR^N)$. Assume that $f\in L^p(\Omega)$ with $p\ge 2$. As in the proof of Theorem \ref{reg-2},  we have that $(-\Delta)^s(u\eta)$ is given by \eqref{for-del-prod}.
Since by assumption $f\in L^p(\Omega)\hookrightarrow L^2(\Omega)$, it follows from Theorem \ref{reg-2} that $u\eta\in W^{2s,2}(\RR^N)$. 

(a) Applying Theorem \ref{theo-sob-emb}(a) with $r=2s$ and $p=2$, we get that $W^{2s,2}(\RR^N)\hookrightarrow W^{s,\frac{2N}{N-2s}}(\RR^N)$. We have shown that $u\eta\in W^{s,\frac{2N}{N-2s}}(\RR^N)$. Let $\omega_1, \omega_2$ be open sets such that
\begin{align}\label{omega12}
\overline{\omega}\subset\omega_1\subset\overline{\omega}_1\subset\omega_2
\subset\overline{\omega}_2\subset\Omega. 
\end{align} 

Since the function $\eta$ and the set $\omega$ in \eqref{eta} are arbitrary, it follows from the observation $u\eta\in W^{s,\frac{2N}{N-2s}}(\RR^N)$ that  $u\in W^{s,\frac{2N}{N-2s}}(\omega_2)$.
Let $q:=\min\{p,\frac{2N}{N-2s}\}$. We notice that $q\geq 2$. Applying again Theorem \ref{theo-sob-emb}(a) with $r=2s$ and $p=2$ and the above $q$, we also get that $W^{2s,2}(\omega_2)\hookrightarrow W^{s,q}(\omega_2)$. We have shown that $u\in W^{s,q}(\omega_2)$. Since by hypothesis, $u\in W_0^{s,2}(\bOm)$, it follows from the Sobolev embedding \eqref{sob-emb1} that $u\in L^{\frac{2N}{N-2s}}(\Omega)\hookrightarrow L^q(\Omega)$. Thus $u\in W^{s,q}(\omega_2)\cap L^q(\Omega)$.
Let
\begin{align*}
g:=u(-\Delta)^s\eta  -I_s(u,\eta).
\end{align*}
Also in this case, it is possible to prove that $g\in L^q(\RR^N)$ and there exists a constant $C>0$ such that
\begin{align}\label{norm-f-q}
\|g\|_{L^q(\RR^N)}\le C\left(\|u\|_{W^{s,q}(\omega_2)}+\|u\|_{L^q(\Omega)}\right).
\end{align}

We omit the proof of \eqref{norm-f-q}; it is totally analogous to the one we made in Step 1. As before, we have proved that $\eta u$ is a weak solution to the Poisson equation \eqref{PE} with $F=\eta(-\Delta)^su+g$. Since $F\in L^q(\RR^N)$ and $q\ge $, it follows from Theorem \ref{re-r-N} that $\eta u\in W^{2s,q}(\RR^N)$. Thus $u\in W_{\rm loc}^{2s,q}(\Omega)$. If $2\le p\le \frac{2N}{N-2s}$, then the proof is finished.

(b) Assume that $p>\frac{2N}{N-2s}$. Since $u\in W_{\rm loc}^{2s,q}(\Omega)$, we have that $u\in W^{2s,q}(\omega_2)$. This implies that $u\in W^{s,q_1}(\omega_2)$ with $q_1:=\min\{p,\frac{Nq}{N-sq}\}=\min\{p,\frac{2N}{N-4s}\}$.  It also follows from Lemma \ref{lem-reg-p} that  $u\in L^{q_1}(\Omega)$. We have shown that $u\in W^{s,q_1}(\omega_2)\cap L^{q_1}(\Omega)$.
Now proceeding as in part (a) we get that $u\in W_{\rm loc}^{2s,q_1}(\Omega)$. Here also if $2\le p\le \frac{2N}{N-4s}$, then the proof is finished. Otherwise, iterating we will get that $u\in W_{\rm loc}^{2s,q_j}(\Omega)$ with $q_j=\min\{p,\frac{2N}{N-sj}\}$ for all $j\ge 2$. Hence, we can find $j\in\NN$, $j\ge 2$,  such that $2\le p\le \frac{2N}{N-sj}$. The proof of Theorem \ref{reg-p} is finished.
\end{proof}

\section{The approach using the heat semigroup representation}\label{fl-heat-sec}

One of the main passages in the proof of Theorems \ref{reg-2} and \ref{reg-p} has been to show that, after having applied the cut-off function $\eta$, the remainder $g$, that we obtain applying \eqref{prod-formu} to the product $\eta u$, belongs to $L^p(\RR^N)$ if $f$ belongs to $L^p(\Omega)$. In this section, we present an alternative proof of this fact, using the characterization of the fractional Laplacian through the heat semigroup introduced in \eqref{fl_heat}. 

The heat equation representation of the operator looks a priori local and this will allow us to give a very precise information on the commutator, in particular in terms of the order of regularity and the localization properties. 

Before going further into our discussion, we first need to describe how the operator introduced in \eqref{fl_heat} behaves when it is applied to the function $\eta u$. For simplicity of notation, let us define 
\begin{align}
	\varrho(t):=e^{t\Delta}(\eta u),\;\;\;t\ge 0.
\end{align}
Then, by definition, we have that $\varrho$ satisfies the following heat equation on $\RR^N$
\begin{align}\label{heat_rho}
	\varrho_t-\Delta \varrho=0, \;\;\;t> 0,\;\;\; \varrho(0)=\eta u.
\end{align}
Furthermore, the solution of \eqref{heat_rho} can be written in the form $\varrho=\phi\eta+z$ with 
\begin{align}\label{heat_phi}
	\phi_t-\Delta \phi=0, \;\;\;t> 0,\;\;\; \phi(0)=u
\end{align}
and
\begin{align}\label{heat_z}
	z_t-\Delta z=2\textrm{div}(\phi\nabla\eta) - \phi\Delta\eta,\;\;\;t> 0,\;\;\; z(0)=0.
\end{align}
Finally, we can trivially compute
\begin{align*}
	(-\Delta)^s(\eta u) &= \frac{1}{\Gamma(-s)} \int_0^{+\infty} \Big(\varrho(t)-\varrho(0)\Big)\frac{dt}{t^{1+s}} 
	\\
	&= \frac{1}{\Gamma(-s)} \int_0^{+\infty} \Big(\eta\phi(t)+z(t)-\eta u(t)\Big)\frac{dt}{t^{1+s}}
	\\
	&=\frac{\eta}{\Gamma(-s)} \int_0^{+\infty} \Big(e^{t\Delta}u-u\Big)\frac{dt}{t^{1+s}} + \frac{1}{\Gamma(-s)} \int_0^{+\infty} \frac{z(t)}{t^{1+s}}\,dt. 
\end{align*}
Therefore we find an expression of the type 
\begin{align}
(-\Delta)^s(\eta u) = \eta(-\Delta)^s u+g,
\end{align}
where the remainder term $g$  is given by
\begin{align}\label{reminder}
	g(x):=\frac{1}{\Gamma(-s)}\int_0^{+\infty}\frac{z(x,t)}{t^{1+s}}\,dt,\;\;x\in\RR^N.
\end{align}

\subsection{Proof of the $L^2$ regularity of $g$}

Keeping in mind the notations that we have just introduced, we can now prove the following result.

\begin{lemma}\label{rem_lemma}
Let $u\in W_0^{s,2}(\bOm)$ and let $\eta$ be the cut-off function introduced in \eqref{eta}. Moreover, let $g$ be the remainder term in the expression 
\begin{align*}
	(-\Delta)^s(\eta u) = \eta(-\Delta)^su+g.
\end{align*} 
Then, there exists a constant $C>0$ (independent of $u$) such that 
\begin{align}\label{norm_est}
	\norm{g}{L^2(\RR^N)}\leq C\norm{u}{W^{s,2}(\Omega)}.
\end{align}
\end{lemma}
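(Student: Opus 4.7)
My plan is to split the defining integral $g=\Gamma(-s)^{-1}\int_0^{+\infty}t^{-1-s}z(t)\,dt$ at $t=1$ and control each piece in $L^2(\RR^N)$. For the large-time tail the bound is immediate: from $z(t)=e^{t\Delta}(\eta u)-\eta\,e^{t\Delta}u$, contractivity of the heat semigroup on $L^2(\RR^N)$ together with $0\le\eta\le 1$ yield the uniform estimate $\|z(t)\|_{L^2(\RR^N)}\le 2\|u\|_{L^2(\Omega)}$, hence $\int_1^{+\infty}t^{-1-s}\|z(t)\|_{L^2}\,dt\le (2/s)\|u\|_{L^2(\Omega)}$. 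This part uses no regularity of $u$ beyond $L^2$.

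The substantive work is the integrability near $t=0$. Here the key point is that $z(0)=0$ and that $z$ solves the inhomogeneous heat equation \eqref{heat_z}. Applying Duhamel's formula and rewriting the source as $2\,\mathrm{div}(\phi\,\nabla\eta)-\phi\,\Delta\eta=2\nabla\phi\cdot\nabla\eta+\phi\,\Delta\eta$, we obtain
\[
z(t)=\int_0^t e^{(t-\tau)\Delta}F(\tau)\,d\tau,\qquad F(\tau):=2\nabla\phi(\tau)\cdot\nabla\eta+\phi(\tau)\,\Delta\eta.
\]
By $L^2$-contractivity of $e^{(t-\tau)\Delta}$, $\|z(t)\|_{L^2}\le\int_0^t\|F(\tau)\|_{L^2}\,d\tau$, and since $\nabla\eta,\Delta\eta\in L^\infty(\RR^N)$, everything reduces to a sharp estimate on $\|\nabla\phi(\tau)\|_{L^2}$. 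Using Plancherel together with the elementary inequality
\[
|\xi|^{2}\,e^{-2\tau|\xi|^{2}}\le C\,\tau^{s-1}|\xi|^{2s},\qquad \xi\in\RR^N,
\]
(valid for every $s\in(0,1)$, since the left-hand side equals $\tau^{s-1}(\tau|\xi|^2)^{1-s}e^{-2\tau|\xi|^2}|\xi|^{2s}$ and $a^{1-s}e^{-2a}$ is bounded on $[0,+\infty)$), one obtains $\|\nabla\phi(\tau)\|_{L^2(\RR^N)}\le C\tau^{(s-1)/2}\|u\|_{W^{s,2}(\Omega)}$. Combined with $\|\phi(\tau)\|_{L^2}\le\|u\|_{L^2(\Omega)}$, this gives $\|F(\tau)\|_{L^2}\le C\tau^{(s-1)/2}\|u\|_{W^{s,2}(\Omega)}+C\|u\|_{L^2(\Omega)}$, and integrating in $\tau$ yields
\[
\|z(t)\|_{L^2(\RR^N)}\le C\,t^{(s+1)/2}\|u\|_{W^{s,2}(\Omega)}+C\,t\,\|u\|_{L^2(\Omega)}, \qquad 0<t<1.
\]
Since $(s+1)/2>s$ and $1>s$ for every $s\in(0,1)$, dividing by $t^{1+s}$ and integrating on $(0,1)$ produces a finite bound proportional to $\|u\|_{W^{s,2}(\Omega)}$, which together with the large-time tail estimate yields \eqref{norm_est}.

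The main obstacle I expect is precisely calibrating the power of $\tau$ in $\|\nabla\phi(\tau)\|_{L^2}$. A crude application of the classical semigroup bound $\|\nabla e^{\tau\Delta}u\|_{L^2}\le C\tau^{-1/2}\|u\|_{L^2}$ (or, equivalently, $\|e^{(t-\tau)\Delta}\mathrm{div}(G)\|_{L^2}\le C(t-\tau)^{-1/2}\|G\|_{L^2}$ applied to the divergence form of the source) gives only $\|z(t)\|_{L^2}\lesssim\sqrt{t}\,\|u\|_{L^2(\Omega)}$, which fails to be integrable against $t^{-1-s}$ near $t=0$ as soon as $s\ge 1/2$. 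Gaining the extra factor $\tau^{s/2}$ required to close the argument uniformly for all $s\in(0,1)$ is exactly the step at which the $W^{s,2}$-regularity of $u$ must be exploited, and this is what the Fourier-side inequality above delivers in one line.
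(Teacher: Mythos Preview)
Your argument is correct. For the short-time piece $0<t<1$, your approach coincides with the paper's Step~2 almost verbatim: the paper keeps the source in divergence form and splits one derivative as $D^{1-s}$ on the heat kernel and $D^{s}$ on $\phi\nabla\eta$, reaching the same bound $\|z(t)\|_{L^2}\le C\,t^{(1+s)/2}\|u\|_{W^{s,2}}$; your Plancherel inequality $|\xi|^{2}e^{-2\tau|\xi|^{2}}\le C\tau^{s-1}|\xi|^{2s}$ applied to $\nabla\phi$ is precisely the Fourier-side rendering of the same interpolation.

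Where your proof genuinely differs is the large-time regime $t>1$. The paper spends Steps~3 and~4 establishing that $\|z_1(t)\|_{L^2}$ and $\|z_2(t)\|_{L^2}$ actually \emph{decay} as $t\to\infty$, via a lengthy case analysis ($N=1$, $N=2$, $N\ge 3$), a mass-extraction $u=(u-m\delta_0)+m\delta_0$ in dimension one, and a parametrised Young inequality with exponents \eqref{pqr_choice} in higher dimensions. You bypass all of this with the one-line observation $z(t)=e^{t\Delta}(\eta u)-\eta\,e^{t\Delta}u$, so that $L^2$-contractivity of the semigroup and $0\le\eta\le 1$ give $\|z(t)\|_{L^2}\le 2\|u\|_{L^2(\Omega)}$ uniformly in $t$; since $t^{-1-s}$ is already integrable on $(1,\infty)$, no decay of $z$ is needed. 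This is considerably shorter and dimension-independent. The paper's finer decay estimates would be relevant if one wanted quantitative control of $g$ in weaker norms or explicit dependence on the geometry, but for the stated $L^2$ bound your crude uniform estimate is entirely sufficient.
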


\begin{proof}
According to the expression \eqref{reminder}, to estimate the $L^2$-norm of $g$, it will be enough to obtain suitable bounds of the $L^2$-norm of $z$. For this purpose, we notice that the solution of \eqref{heat_z} can be computed explicitly as 
\begin{align}\label{z_sol}
	z(x,t) = \int_0^t\int_{\RR^N} G(x-y,t-\tau)h(y,\tau)\,dyd\tau=\int_0^t \big[G(\cdot,t-\tau)\ast h(\cdot,\tau)\big](x)\,d\tau,\;\;x\in\RR^N,
\end{align}
where $G$ is the Gaussian kernel
\begin{align*}
	G(x,t):= \left(4\pi t\right)^{-\frac{N}{2}}\exp\left(-\frac{|x|^2}{4t}\right),\;\;x\in\RR^N,\;t>0,
\end{align*}
and $h$ is given by $h:=2\textrm{div}(\phi\nabla\eta) - \phi\Delta\eta$. Hence, in particular, we have
\begin{align}\label{z_split}
	z(t) = 2\int_0^t G(t-\tau)\ast \textrm{div}(\phi(\tau)\nabla\eta)\,d\tau - \int_0^t G(t-\tau)\ast (\phi(\tau)\Delta\eta)\,d\tau:=z_1(t)-z_2(t).
\end{align}

In \eqref{z_split}, since we are only interested in the behavior of $z$ with respect to the variable $t$, and for keeping the notations lighter, we have omitted the dependence of $z$ on the variable $x$. We will maintain this convention until the end of the proof. Finally, we have (recall that $\Gamma(1-s)=-s\Gamma(-s)$)
\begin{align}\label{int_z_split}
	\norm{g}{L^2(\RR^N)} \leq & \frac{s}{\Gamma(1-s)}\int_0^{+\infty}\frac{\norm{z(t)}{L^2(\RR^N)}}{t^{1+s}}\,dt \notag\\
	=& \frac{s}{\Gamma(1-s)}\int_0^1\frac{\norm{z(t)}{L^2(\RR^N)}}{t^{1+s}}\,dt + \frac{s}{\Gamma(1-s)} \int_1^{+\infty}\frac{\norm{z(t)}{L^2(\RR^N)}}{t^{1+s}}\,dt \nonumber
	\\
	\leq & \frac{s}{\Gamma(1-s)}\int_0^1\frac{\norm{z_1(t)}{L^2(\RR^N)}}{t^{1+s}}\,dt + \frac{s}{\Gamma(1-s)}\int_0^1\frac{\norm{z_2(t)}{L^2(\RR^N)}}{t^{1+s}}\,dt\notag\\
	& + \frac{s}{\Gamma(1-s)}\int_1^{+\infty}\frac{\norm{z_1(t)}{L^2(\RR^N)}}{t^{1+s}}\,dt \nonumber
	+ \frac{s}{\Gamma(1-s)}\int_1^{+\infty}\frac{\norm{z_2(t)}{L^2(\RR^N)}}{t^{1+s}}\,dt\nonumber		
	\\
	:=& A_1^1 + A_1^2 + A_2^1 + A_2^2.
\end{align}
We proceed now estimating the terms $A_1^1$, $A_1^2$, $A_2^1$ and $A_2^2$ separately. 

\subsubsection*{Step 1. Preliminary estimates.} 
First of all, throughout the remainder of the proof, $C$ will denote a generic positive constant depending only on $\Omega$, $\eta$, $s$ and $N$. This constant may change even from line to line. 

Now, we observe that by using some classical energy estimates for solutions to the heat equation, we obtain that
\begin{align}\label{energy_phi}
	\norm{\phi(t)}{L^2(\RR^N)}\leq\norm{u}{L^2(\Omega)}, 
\end{align}
\begin{align}\label{energy_phi_s}
	\norm{\phi(t)}{W^{s,2}(\RR^N)}\leq C\norm{u}{W^{s,2}(\Omega)},\;\;\;\textrm{ for all }\;s\in(0,1).
\end{align}

These inequalities can be easily proved by multiplying \eqref{heat_phi} by $\phi$ and $(-\Delta)^s\phi$, respectively, and integrating by parts. Moreover, to obtain \eqref{energy_phi_s} we also took into account that, according to \cite[Lemma 16.3]{tartar}, we have 
\begin{align*}
	\norm{(-\Delta)^{\frac{s}{2}}\phi(t)}{L^2(\RR^N)}=C\int_{\RR^N}\int_{\RR^N}\frac{|\phi(x,t)-\phi(y,t)|^2}{|x-y|^{N+2s}}\,dxdy.
\end{align*} 

In our proof, we will also need the following classical property of convolution (see e.g. \cite[Proposition 8.9]{folland}). For all $\varphi_1\in L^{q_1}(\RR^N)$, $\varphi_2\in L^{q_2}(\RR^N)$ and for all $q_1,q_2$ and $q_3$ satisfying
\begin{align}\label{convolution_young_cond}
	1\leq q_1,q_2,q_3<+\infty,\;\;\;\frac{1}{q_1}+\frac{1}{q_2}=\frac{1}{q_3}+1,
\end{align}	
we have that
\begin{align}\label{convolution_young}
	\norm{\varphi_1\ast\varphi_2}{L^{q_3}(\RR^N)}\leq\norm{\varphi_1}{L^{q_1}(\RR^N)}\norm{\varphi_2}{L^{q_2}(\RR^N)}.
\end{align}

This is a straightforward consequence of the Young inequality. Finally, we recall that for all $1\leq p<\infty$ and $k\geq 0$, the function $G$ satisfies the following decay properties (see, e.g. \cite{kato}): there exists a constant $C>0$ such that
\begin{align}\label{gaussian_est}
	\norm{D^kG(t)}{L^p(\RR^N)} \leq Ct^{-\frac{N}{2}\left(1-\frac{1}{p}\right)-\frac{k}{2}}.
\end{align}

Here, $k=(k_1,k_2,\ldots,k_N)$ is a multi-index with modulus $|k|=k_1+k_2+\cdots+k_N$ and we used the classical Schwartz notation 
\begin{align*}
	D^k\phi(x)=\frac{\partial^{|k|}\phi(x)}{\partial x_1^{k_1}\partial x_2^{k_2}\cdots\partial x_N^{k_N}}.
\end{align*}
In particular, we have that
\begin{align}\label{gaussian_est_2}
	\begin{array}{ll}
	\norm{G(t)}{L^2(\RR^N)} \leq Ct^{-\frac{N}{4}}, & \norm{\nabla_x G(t)}{L^2(\RR^N)} \leq C t^{-\frac{N}{4}-\frac{1}{2}}
	\\
	\\
	\norm{(G\ast h)(t)}{L^2(\RR^N)} \leq C \norm{h}{L^2(\Omega)}, & \norm{(\nabla_x G\ast h)(t)}{L^2(\RR^N)} \leq Ct^{-\frac{1}{2}}\norm{h}{L^2(\Omega)}.
	\end{array}
\end{align}

\subsubsection*{Step 2. Upper bound of $A_1:=A_1^1+A_1^2$.} 
We start by estimating the contribution of $z_1$. Using \eqref{convolution_young} with $q_1=1$,  $q_2=q_3=2$, and \eqref{energy_phi_s} we get that
\begin{align*}
	\norm{z_1(t)}{L^2(\RR^N)} &\leq\int_0^t\norm{G(t-\tau)\ast\textrm{div}(\phi(\tau)\nabla\eta)}{L^2(\RR^N)}d\tau \\
	&\leq C\int_0^t\norm{D^{1-s}G(t-\tau)\ast D^s(\phi(\tau)\nabla\eta)}{L^2(\RR^N)}d\tau 
	\\
	&\leq C\int_0^t (t-\tau)^{-\frac{1-s}{2}}\norm{\phi(\tau)\nabla\eta}{W^{s,2}(\RR^N)}d\tau\\
	& \leq C\norm{u}{W^{s,2}(\Omega)}\int_0^t (t-\tau)^{-\frac{1-s}{2}}d\tau =Ct^{\frac{1+s}{2}}\norm{u}{W^{s,2}(\Omega)}.
\end{align*}	

In the previous computations,  $D^s$ denotes the differential operator with Fourier symbol $|\cdot|^s$, that is, $D^s\zeta(\cdot)=\mathcal{F}^{-1}\big\{\,|\cdot|^s\mathcal{F}\zeta(\cdot)\big\}$ for all functions $\zeta$ sufficiently smooth. Concerning the contribution of $z_2$, instead, we have
\begin{align*}
	\norm{z_2(t)}{L^2(\RR^N)} &\leq\int_0^t\norm{G(t-\tau)\ast (\phi(\tau)\Delta\eta)}{L^2(\RR^N)}\,d\tau 
	\\
	&\leq C\int_0^t\norm{\phi(\tau)\Delta\eta}{L^2(\RR^N)}\,d\tau\leq Ct\norm{u}{L^2(\Omega)}.
\end{align*}
Since $0<s<1$, we have that
\begin{align}\label{A_1_est}
	A_1 \leq& C\norm{u}{W^{s,2}(\Omega)}\int_0^1\frac{dt}{t^{\frac{1+s}{2}}} + C\norm{u}{L^2(\Omega)}\int_0^1\frac{dt}{t^s}\notag\\
	\leq &C\norm{u}{W^{s,2}(\Omega)} + C\norm{u}{L^2(\Omega)} \leq C\norm{u}{W^{s,2}(\Omega)}.
\end{align}

\subsubsection*{Step 3. Upper bound of $A_2^1$.} 
We have to distinguish three cases: $N=1$, $N=2$ and $N\geq 3$. 

\paragraph*{\textbf{Case 1: $N=1$}}
Since $u\in L^2(\Omega)$ and $\Omega$ is bounded, we also have $u\in L^1(\Omega)$. Hence, the quantity 
\begin{align*}
	m:=\int_{\RR} u\,dx=\int_{\Omega} u\,dx,
\end{align*}
is well defined.

\indent Let us now rewrite $u=(u-m\delta_0) + m\delta_0$, where $\delta_0$ is the Dirac delta at $x=0$. With this splitting in mind, we have that $\phi$ can be seen as the sum $\phi=\psi+mG$, with $\psi$ solving 
\begin{align}\label{heat_psi}
	\psi_t-\psi_{xx}=0, \;\;\; t>0,\;\;\; \psi(0)=u-m\delta_0.
\end{align}
Therefore, we obtain
\begin{align*}
	z_1(t) = \int_0^t G(t-\tau)\ast(\psi(\tau)\eta_x)_x\,d\tau + \int_0^t G(t-\tau)\ast(mG(\tau)\eta_x)_x\,d\tau:=z_{1,\psi}(t)+z_{1,G}(t).
\end{align*}
Let us consider firstly the term $z_{1,\psi}$. First of all, we notice that $\psi=\theta_x$ with $\theta$ solving
\begin{align}\label{heat_theta}
	\theta_t-\theta_{xx}=0, \;\;\; t>0, \;\;\; \theta(0)=\int_{-\infty}^x (u-m\delta_0)\,d\xi,
\end{align}
and therefore, 
\begin{align*}
	z_{1,\psi}(t) = \int_0^t G(t-\tau)\ast(\theta_x(\tau)\eta_x)_x\,d\tau.
\end{align*}
Now
\begin{align*}
	\norm{z_{1,\psi}(t)}{L^2(\RR)} & \leq \int_0^t \norm{G(t-\tau)\ast(\theta_x(\tau)\eta_x)_x}{L^2(\RR)}\,d\tau 
	\\
	&= \int_0^t \norm{G_x(t-\tau)\ast(\theta_x(\tau)\eta_x)}{L^2(\RR)}\,d\tau 
	\\
	&\leq \int_0^t (t-\tau)^{-\frac{3}{4}}\norm{\theta_x(\tau)\eta_x}{L^1(\RR)}\,d\tau.
\end{align*}
Moreover, we have
\begin{align*}
	\norm{\theta_x(\tau)\eta_x}{L^1(\RR)} \leq C\norm{\theta_x(\tau)}{L^1(\Omega)}\leq C\tau^{-\frac{1}{2}}\norm{\theta(0)}{L^1(\Omega)}\leq C\tau^{-\frac{1}{2}}\norm{u}{L^2(\Omega)},
\end{align*}
where the last inequality is justified by the fact that the initial datum $\theta(0)$ is well defined as an $L^1$-function compactly supported in $\Omega$, and there exists a constant $C>0$ such that 
\begin{align*}
	\norm{\theta(0)}{L^1(\Omega)}\leq C\norm{u}{L^2(\Omega)}.
\end{align*} 
See \cite[Theorem 1]{DZ} for more details. Hence,
\begin{align*}
	\norm{z_{1,\psi}(t)}{L^2(\RR)} &\leq C\norm{u}{L^2(\Omega)}\int_0^t (t-\tau)^{-\frac{3}{4}}\tau^{-\frac{1}{2}}\,d\tau = Ct^{-\frac{1}{4}}\norm{u}{L^2(\Omega)}.
\end{align*}
Let us now analyze the term $z_{1,G}$ which, we remind, is defined as 
\begin{align*}
	z_{1,G}(t) = m\int_0^t G(t-\tau)\ast(G(\tau)\eta_x)_x\,d\tau.
\end{align*}
We have
\begin{align*}
	\norm{z_{1,G}(t)}{L^2(\RR)} &\leq m\int_0^t \norm{G(t-\tau)\ast(G(\tau)\eta_x)_x}{L^2(\RR)}\,d\tau = m\int_0^t \norm{G_x(t-\tau)\ast(G(\tau)\eta_x)}{L^2(\RR)}\,d\tau.
\end{align*}
Now, since $u$ is compactly supported in $\Omega$, the Cauchy-Schwarz inequality yields 
\begin{align*}
	m\leq\norm{u}{L^1(\Omega)}\leq\sqrt{|\Omega|}\norm{u}{L^2(\Omega)},
\end{align*}	
where $|\Omega|$ is the Lebesgue measure of $\Omega$; hence 
\begin{align*}
	\norm{z_{1,G}(t)}{L^2(\RR)} &\leq C\norm{u}{L^2(\Omega)}\int_0^t \norm{G_x(t-\tau)\ast(G(\tau)\eta_x)}{L^2(\RR)}\,d\tau. 
\end{align*}
Rewrite $G(\tau)\eta_x=(G(\tau)\eta)_x-G_x(\tau)\eta$. Then
\begin{align*}
	\norm{z_{1,G}(t)}{L^2(\RR)} \leq& C\norm{u}{L^2(\Omega)}\int_0^t \norm{G_x(t-\tau)\ast(G(\tau)\eta)_x}{L^2(\RR)}\,d\tau 
	\\
	&+ C\norm{u}{L^2(\Omega)}\int_0^t \norm{G_x(t-\tau)\ast(G_x(\tau)\eta)}{L^2(\RR)}\,d\tau := J_1 + J_2.
\end{align*}
Concerning $J_1$ we have 
\begin{align*}
	J_1 &\leq C\norm{u}{L^2(\Omega)}\int_0^t \norm{D^{1-s}G_x(t-\tau)\ast D^s(G(\tau)\eta)}{L^2(\RR)}\,d\tau
	\\
	&\leq C\norm{u}{L^2(\Omega)}\int_0^t \norm{D^{1-s}G_x(t-\tau)}{L^1(\RR)}\norm{D^s(G(\tau)\eta)}{L^2(\RR)}\,d\tau 
	\\
	&\leq C\norm{u}{L^2(\Omega)}\int_0^t (t-\tau)^{-\frac{2-s}{2}}\tau^{-\frac{1}{4}-\frac{s}{2}}\,d\tau = Ct^{-\frac{1}{4}}\norm{u}{L^2(\Omega)}.
\end{align*}
Finally, for $J_2$ we have 
\begin{align*}
	J_2 &\leq C\norm{u}{L^2(\Omega)}\int_0^t \norm{G_x(t-\tau)}{L^2(\RR)}\norm{G_x(\tau)\eta_x}{L^1(\RR)}\,d\tau  
	\\
	&\leq C\norm{u}{L^2(\Omega)}\int_0^t (t-\tau)^{-\frac{3}{4}}\tau^{-\frac{1}{2}}\,d\tau = C t^{-\frac{1}{4}}\norm{u}{L^2(\Omega)}.
\end{align*}
Summarizing we get that
\begin{align*}
	\norm{z_{1,G}(t)}{L^2(\RR)} &\leq Ct^{-\frac{1}{4}}\norm{u}{L^2(\Omega)}
\end{align*}
which, combined with the estimate that we have obtained before for $z_{1,\psi}$ gives
\begin{align*}
	\norm{z_1(t)}{L^2(\RR)} &\leq Ct^{-\frac{1}{4}}\norm{u}{L^2(\Omega)}.
\end{align*}
Therefore, since $s>0$, we finally get that
\begin{align*}
	A_2^1=\frac{s}{\Gamma(1-s)}\int_1^{+\infty}\frac{\norm{z_1(t)}{L^2(\RR)}}{t^{1+s}}\,dt\leq C\norm{u}{L^2(\Omega)}\int_1^{+\infty}\frac{dt}{t^{s+\frac{5}{4}}}=C\norm{u}{L^2(\Omega)}.
\end{align*}

\paragraph*{\textbf{Case 2: $N=2$}}
Using again \eqref{energy_phi_s}, \eqref{convolution_young} with $q_1=q_3=2$ and $q_2=1$ and the fact that $\eta$ has compact support, we get that
\begin{align}\label{z12_est_2}
	\norm{z_1(t)}{L^2(\RR^2)} &\leq 2\int_0^t \norm{G(t-\tau)\ast\textrm{div}(\phi(\tau)\nabla\eta)}{L^2(\RR^2)}\,d\tau \notag\\
	&\le 2\int_0^t \norm{D^{1-s}G(t-\tau)\ast D^s(\phi(\tau)\nabla\eta)}{L^2(\RR^2)}\,d\tau \nonumber
	\\
	&\leq C\norm{u}{W^{s,2}(\Omega)}\int_0^t (t-\tau)^{-1+\frac{s}{2}}\,d\tau \leq Ct^{\frac{s}{2}}\norm{u}{W^{s,2}(\Omega)}.
\end{align}
Since $s>0$, it follows that
\begin{align*}
	A_2^1=\frac{s}{\Gamma(1-s)}\int_1^{+\infty}\frac{\norm{z_1(t)}{L^2(\RR^2)}}{t^{1+s}}\,dt\leq C\norm{u}{W^{s,2}(\Omega)}\int_1^{+\infty}\frac{dt}{t^{1+\frac{s}{2}}}=C\norm{u}{W^{s,2}(\Omega)}.
\end{align*}

\paragraph*{\textbf{Case 3: $N\geq 3$}}
This case is more delicate and we need to proceed in a slightly different way. For a given $\varepsilon\in [0,1]$, we will apply again \eqref{convolution_young} but this time by choosing 
\begin{align}\label{pqr_choice}
	q_1=\frac{4-2\varepsilon}{4-3\varepsilon},\;\; q_2=2-\varepsilon,\;\; q_3=2.
\end{align}

It is straightforward to check that $q_1$, $q_2$ and $q_3$ given in \eqref{pqr_choice} satisfy condition \eqref{convolution_young_cond}. In particular, we notice that $q_2\in[1,2]$. With this particular choice of the parameters we have
\begin{align}\label{z12_est_N}
	\norm{z_1(t)}{L^2(\RR^2)} &\leq 2\int_0^t \norm{G(t-\tau)\ast\textrm{div}(\phi(\tau)\nabla\eta)}{L^2(\RR^N)}\,d\tau \notag\\
	&= 2\int_0^t \norm{D^{1-s}G(t-\tau)\ast D^s(\phi(\tau)\nabla\eta)}{L^2(\RR^N)}\,d\tau \nonumber
	\\
	&\leq C\int_0^t (t-\tau)^{-\frac{N}{2}\frac{\varepsilon}{4-2\varepsilon}-\frac{1-s}{2}}\norm{\phi(\tau)\nabla\eta}{L^{2-\varepsilon}(\RR^N)}\,d\tau \notag\\
	&\leq C t^{\frac{1+s}{2}-\frac{N}{2}\frac{\varepsilon}{4-2\varepsilon}}\norm{u}{W^{s,2}(\Omega)},
\end{align}
provided that
\begin{align*}
	\frac{1+s}{2}-\frac{N}{2}\frac{\varepsilon}{4-2\varepsilon}>0\;\;\;\Longrightarrow \varepsilon<\frac{4+4s}{N+2+2s}.
\end{align*}
Therefore, 
\begin{align*}
	A_2^1=\frac{s}{\Gamma(1-s)}\int_1^{+\infty}\frac{\norm{z_1(t)}{L^2(\RR^N)}}{t^{1+s}}\,dt\leq C\norm{u}{W^{s,2}(\Omega)}\int_1^{+\infty}\frac{dt}{t^{\frac{1+s}{2}+\frac{N}{2}\frac{\varepsilon}{4-2\varepsilon}}}=C\norm{u}{W^{s,2}(\Omega)},
\end{align*}
if we impose that 
\begin{align*}
	\frac{1+s}{2}+\frac{N}{2}\frac{\varepsilon}{4-2\varepsilon}>1\;\;\;\Longrightarrow \varepsilon>\frac{4-4s}{N+2-2s}.
\end{align*}
Thus, we obtain a further condition on $\varepsilon$, namely
\begin{align*}
		\varepsilon\in\left(\frac{4-4s}{N+2-2s},\frac{4+4s}{N+2+2s}\right).
\end{align*}
Furthermore, we can easily check that, for all $s\in(0,1)$ and $N\geq 3$ the set 
\begin{align}\label{espilon_set}
	[0,1]\cap\left(\frac{4-4s}{N+2-2s},\frac{4+4s}{N+2+2s}\right)\ne\emptyset.
\end{align}
Therefore, for any given $s\in(0,1)$ and $N\geq 3$, we can always choose $q_1$, $q_2$ and $q_3$ as in \eqref{pqr_choice} such that 
\begin{align*}
	A_2^1\leq C\norm{u}{W^{s,2}(\Omega)}.
\end{align*}

\subsubsection*{Step 4. Upper bound of $A_2^2$.} 
Using again \eqref{convolution_young}, this time with $q_1=1$, $q_2=q_3=2$ and the fact that $\eta$ has compact support, for a given $\alpha\in(2-2s,2)$ we can estimate
\begin{align}\label{z2_est}
	\norm{z_2(t)}{L^2(\RR^N)} &\leq\int_0^t\norm{G(t-\tau)\ast(\phi(\tau)\Delta\eta)}{L^2(\RR^N)}\,d\tau\notag\\
	 &\le \int_0^t\norm{D^{\alpha}G(t-\tau)\ast D^{-\alpha}(\phi(\tau)\Delta\eta)}{L^2(\RR^N)}\,d\tau	\nonumber
	\\
	&\leq \int_0^t\norm{D^{\alpha}G(t-\tau)}{L^1(\RR^N)}\norm{D^{-\alpha}(\phi(\tau)\Delta\eta)}{L^2(\RR^N)}\,d\tau \notag\\
	&\leq C\int_0^t (t-\tau)^{-\frac{\alpha}{2}}\norm{\phi(\tau)\Delta\eta}{W^{-\alpha,2}(\RR^N)}\,d\tau \nonumber
	\\
	&\leq Ct^{1-\frac{\alpha}{2}}\norm{u}{L^2(\Omega)}.
\end{align}
Hence
\begin{align*}
	A_2^2 \leq C\norm{u}{L^2(\Omega)}\int_1^{+\infty}\frac{dt}{t^{s+\frac{\alpha}{2}}} = C\norm{u}{L^2(\Omega)}.
\end{align*}

\subsubsection*{Step 5. Conclusion} 
Collecting all the above estimates, we can finally conclude that there exists a constant $C>0$ such that \eqref{norm_est} holds, and the proof of Lemma \ref{rem_lemma} is finished.
\end{proof}

\subsection{Proof of the $L^p$ regularity of $g$}

Lemma \ref{rem_lemma} provides an alternative proof of the $L^2(\RR^N)$ regularity of the remainder term $g$ which appears in the formula for the fractional Laplacian of the product $\eta u$. Moreover, as we did before in Section \ref{local-reg-sec}, also this result can be generalized to the $L^p$ setting. In particular, we can prove the following.

\begin{lemma}\label{rem_lemma_p}
Let $u\in W_0^{s,2}(\bOm)$, $p\geq 2$, $N\geq 2$ and let $\eta$ be the cut-off function introduced in \eqref{eta}.  Moreover, let $g$ be the remainder term in the expression 
\begin{align*}
	(-\Delta)^s(\eta u) = \eta(-\Delta)^su+g.
\end{align*} 
Then, there exists a constant $C>0$ (independent of $u$) such that 
\begin{align}\label{norm_est_r}
	\norm{g}{L^p(\RR^N)}\leq C\left(\norm{u}{L^p(\Omega)} + \norm{u}{W^{s,2}(\Omega)}\right).
\end{align}
\end{lemma}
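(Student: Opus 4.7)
The plan is to mirror the structure of the proof of Lemma \ref{rem_lemma}, replacing every $L^2$ norm by an $L^p$ norm and adapting Young's convolution inequality to the new exponents. More precisely, starting again from the representation $g(x)=\frac{1}{\Gamma(-s)}\int_0^{+\infty}\frac{z(x,t)}{t^{1+s}}\,dt$ with $z=z_1-z_2$ as in \eqref{z_split}, I would split the integral into the short-time part $t\in(0,1)$ and the long-time part $t\in(1,\infty)$, producing four contributions $A_1^1$, $A_1^2$, $A_2^1$, $A_2^2$ to $\|g\|_{L^p(\RR^N)}$, each of which needs to be controlled by $\|u\|_{L^p(\Omega)}+\|u\|_{W^{s,2}(\Omega)}$.

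The basic tools remain the same as in the $L^2$ case, but expanded: the $L^p$-contractivity of the heat semigroup gives $\|\phi(t)\|_{L^p(\RR^N)}\leq\|u\|_{L^p(\Omega)}$, while the already proved estimate \eqref{energy_phi_s} still yields $\|\phi(t)\|_{W^{s,2}(\RR^N)}\leq C\|u\|_{W^{s,2}(\Omega)}$. Since $\nabla\eta$ and $\Delta\eta$ are supported in the compact set $\omega\Subset\Omega$, the Sobolev embedding $W^{s,2}(\omega)\hookrightarrow L^{\frac{2N}{N-2s}}(\omega)$ (valid because $N\geq 2$ and $N>2s$) lets me bound $\|\phi(\tau)\nabla\eta\|_{L^q(\RR^N)}$ and $\|\phi(\tau)\Delta\eta\|_{L^q(\RR^N)}$ uniformly in $\tau$ for any $q\in[1,\max\{p,\frac{2N}{N-2s}\}]$ by the right-hand side of \eqref{norm_est_r}. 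With these ingredients, Young's inequality $\|\varphi_1\ast\varphi_2\|_{L^p(\RR^N)}\leq\|\varphi_1\|_{L^{q_1}(\RR^N)}\|\varphi_2\|_{L^{q_2}(\RR^N)}$ with $\frac{1}{q_1}+\frac{1}{q_2}=1+\frac{1}{p}$, combined with the Gaussian decay \eqref{gaussian_est}, gives estimates of the form $\|z_i(t)\|_{L^p(\RR^N)}\leq Ct^{\alpha_i}$ on each of the intervals $(0,1)$ and $(1,+\infty)$.

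For the short-time contributions, I would follow Step 2 of the $L^2$ proof: write the divergence in $z_1$ as $D^{1-s}\cdot D^{s}(\phi\nabla\eta)$, apply Young's inequality with $q_2=2$ and $q_1=\frac{2p}{p+2}$ (so that $\frac{1}{q_1}+\frac{1}{2}=1+\frac{1}{p}$), and use \eqref{energy_phi_s} to produce a power $t^{\frac{1+s}{2}-\frac{N}{2}\left(1-\frac{1}{q_1}\right)}$ that is integrable near the origin against $t^{-1-s}$ provided $N$ and $p$ are in the admissible range. For $z_2$ a similar computation, using a negative fractional derivative as in \eqref{z2_est}, produces an integrable singularity.

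For the long-time contributions, which are the more delicate ones, I would use the same idea as in Case 3 ($N\geq 3$) of the proof of Lemma \ref{rem_lemma}: introduce an auxiliary parameter $\varepsilon\in[0,1]$ and choose $q_1,q_2,q_3$ of the form \eqref{pqr_choice} (with $q_3=p$), so that the exponent of $t$ appearing after applying Young's inequality and the Gaussian decay is negative enough to make $\int_1^{+\infty}t^{-1-s}\|z_i(t)\|_{L^p(\RR^N)}\,dt$ finite. The main obstacle, as in the $L^2$ setting, is to verify that the resulting simultaneous constraints on $\varepsilon$ define a non-empty interval for every admissible $s\in(0,1)$, $N\geq 2$ and $p\geq 2$; this reduces to a routine but tedious check of inequalities analogous to \eqref{espilon_set}, and the hypothesis $N\geq 2$ is precisely what ensures enough decay of the Gaussian kernel to avoid the delicate splitting $u=(u-m\delta_0)+m\delta_0$ used in the one-dimensional case of Lemma \ref{rem_lemma}.
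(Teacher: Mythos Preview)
Your proposal is correct and follows essentially the same approach as the paper: both split $\|g\|_{L^p}$ into the four contributions $A_1^1,A_1^2,A_2^1,A_2^2$, control them via Young's inequality with suitably chosen exponents together with the Gaussian decay \eqref{gaussian_est}, the $L^p$-contractivity of the heat semigroup, and the $W^{s,2}$ energy estimate \eqref{energy_phi_s}. The only cosmetic differences are that you additionally invoke the local Sobolev embedding $W^{s,2}(\omega)\hookrightarrow L^{\frac{2N}{N-2s}}(\omega)$ on $\mathrm{supp}\,\nabla\eta$ to gain flexibility in $q_2$, and that for the long-time term $A_2^1$ you parametrize the Young exponents through the auxiliary $\varepsilon$ as in Case~3 of Lemma~\ref{rem_lemma}, whereas the paper simply re-runs the Step~2 computation imposing the complementary range $q_1\in\big(\frac{N}{N-1+s},\min\{2,\frac{N}{N-1-s}\}\big)$.
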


\begin{proof}
We recall that, according to \eqref{reminder}, to estimate the $L^p$-norm of $g$ we only need an appropriate bound for the $L^p$-norm of the function $z$ introduced in \eqref{z_sol}. Moreover, also in this case we have $\norm{g}{L^p(\RR^N)}\leq A_1^1+A_1^2+A_2^1+A_2^2$, where, with some abuse of notations, the terms $A_1^1$, $A_1^2$, $A_2^1$ and $A_2^2$ are the same ones as in \eqref{int_z_split}, after having replaced $\norm{z(t)}{L^2(\RR^N)}$ with $\norm{z(t)}{L^p(\RR^N)}$. 

\subsubsection*{Step 1. Preliminary estimates.} 

First of all, we recall that from Proposition \ref{prop-ex}, it follows that $u\in W_0^{s,2}(\bOm)$ and it follows from Lemma \ref{lem-reg-p} that $u\in L^p(\Omega)$.

Moreover, we observe that the classical energy decay estimates presented in \eqref{energy_phi} can be generalized to the $L^p$ setting. In particular we have 
\begin{align}\label{energy_phi_p}
	\norm{\phi(t)}{L^p(\RR^N)}\leq\norm{u}{L^p(\Omega)}.
\end{align}

The proof of \eqref{energy_phi_p} is a straightforward application of \eqref{convolution_young}, taking into account the fact that the solution of the heat equation \eqref{heat_phi} is given by the convolution $\phi(t)=G(t)\ast u$.

\subsubsection*{Step 2. Upper bound of $A_1^1$.} 
First of all, throughout the remainder of the proof, $C$ will denote a generic positive constant depending only on $\Omega$, $\eta$, $s$, $p$ and $N$. This constant may change even from line to line. 


\noindent Now, using \eqref{convolution_young} with $q_1=2p/(2+p)$, $q_2=2$ and $q_3=p$ we get that
\begin{align*}
	\norm{z_1(t)}{L^p(\RR^N)} &\leq\int_0^t\norm{G(t-\tau)\ast\textrm{div}(\phi(\tau)\nabla\eta)}{L^p(\RR^N)}d\tau \\
	&\leq C\int_0^t\norm{D^{1-s}G(t-\tau)\ast D^s(\phi(\tau)\nabla\eta)}{L^p(\RR^N)}d\tau 
	\\
	&\leq C\int_0^t\norm{D^{1-s}G(t-\tau)}{L^{q_1}(\RR^N)}\norm{D^s(\phi(\tau)\nabla\eta)}{L^2(\RR^N)}d\tau 
	\\	
	& \leq C\norm{u}{W^{s,2}(\Omega)}\int_0^t (t-\tau)^{-\frac{N}{2}\left(1-\frac{1}{q_1}\right)-\frac{1-s}{2}}d\tau =Ct^{\frac{1+s}{2}-\frac{N}{2}\left(1-\frac{1}{q_1}\right)}\norm{u}{W^{s,2}(\Omega)},
\end{align*}	
provided that 
\begin{align*}
	\frac{1+s}{2}-\frac{N}{2}\left(1-\frac{1}{q_1}\right)>0 \;\;\;\Longrightarrow\;\;\; q_1<\frac{N}{N-1-s}.
\end{align*}


\noindent In view of the previous estimate, we have

\begin{align*}
	A_1^1\leq C \norm{u}{W^{s,2}(\Omega)} \int_0^1 \frac{dt}{t^{\frac{1+s}{2}+\frac{N}{2}\left(1-\frac{1}{q_1}\right)}} = C \norm{u}{W^{s,2}(\Omega)},
\end{align*}
provided that
\begin{align*}
	\frac{1+s}{2}+\frac{N}{2}\left(1-\frac{1}{q_1}\right)<1 \;\;\;\Rightarrow\;\;\; q_1<\frac{N}{N-1+s}.
\end{align*}

Finally, we notice that, by hypothesis we have $p\geq 2$; this, according to the definition of $q_1$ that we are considering, corresponds to the further condition $1\leq q_1<2$. Hence, recollecting the conditions on $q_1$ that we have encountered we conclude that we have to impose 
\begin{align*}
	1\leq q_1 < \min\left\{2,\frac{N}{N-1+s},\frac{N}{N-1-s}\right\}=\frac{N}{N-1+s}=1+\frac{1-s}{N-1+s}
\end{align*}
Summarizing, we have 
\begin{align*}
	A_1^1\leq C \norm{u}{W^{s,2}(\Omega)},
\end{align*}
if in our computations we assume 
\begin{align*}
	1\leq q_1 < 1+\frac{1-s}{N-1+s}.
\end{align*}

\subsubsection*{Step 3. Upper bound of $A_1^2$.}
We have
\begin{align*}
	\norm{z_2(t)}{L^p(\RR^N)} &\leq\int_0^t\norm{G(t-\tau)\ast (\phi(\tau)\Delta\eta)}{L^p(\RR^N)}\,d\tau \\
	&\leq C\int_0^t\norm{\phi(\tau)\Delta\eta}{L^p(\RR^N)}\,d\tau\leq Ct\norm{u}{L^p(\Omega)}.
\end{align*}
Since $0<s<1$, we have that
\begin{align}\label{A_1_est_r}
	A_1^2 \leq C\norm{u}{L^p(\Omega)}\int_0^1\frac{dt}{t^s} \leq C\norm{u}{L^p(\Omega)}.
\end{align}

\subsubsection*{Step 4. Upper bound of $A_2^1$.} 
Repeating the same computations that we did in Step 2, we get that
\begin{align*}
	\norm{z_1(t)}{L^p(\RR^N)} Ct^{\frac{1+s}{2}-\frac{N}{2}\left(1-\frac{1}{q_1}\right)}\norm{u}{W^{s,2}(\Omega)},
\end{align*}	
provided that 
\begin{align*}
	\frac{1+s}{2}-\frac{N}{2}\left(1-\frac{1}{q_1}\right)>0 \;\;\;\Longrightarrow\;\;\; q_1<\frac{N}{N-1-s}.
\end{align*}
Therefore
\begin{align*}
	A_2^1\leq C \norm{u}{W^{s,2}(\Omega)} \int_1^{+\infty} \frac{dt}{t^{\frac{1+s}{2}+\frac{N}{2}\left(1-\frac{1}{q_1}\right)}} = C \norm{u}{W^{s,2}(\Omega)},
\end{align*}
provided that
\begin{align*}
	\frac{1+s}{2}+\frac{N}{2}\left(1-\frac{1}{q_1}\right)>1 \;\;\;\Longrightarrow\;\;\; q_1>\frac{N}{N-1+s}.
\end{align*}

Finally, we notice that, by hypothesis we have $p\geq 2$; this, according to the definition of $q_1$ that we are considering, corresponds to the further condition $1\leq q_1<2$. Hence, recollecting the conditions on $q_1$ that we encountered we conclude that we have to impose 
\begin{align*}
	q_1\in\Bigg[1, \min\left\{2,\frac{N}{N-1-s}\right\}\Bigg)\cap\Bigg(\frac{N}{N-1+s}, +\infty\Bigg)=\Bigg(\frac{N}{N-1+s},\min\left\{2,\frac{N}{N-1-s}\right\}\Bigg).
\end{align*}
Summarizing, we have 
\begin{align*}
	A_1^2\leq C \norm{u}{W^{s,2}(\Omega)},
\end{align*}
if in our computations we assume 
\begin{align*}
	q_1\in \Bigg(\frac{N}{N-1+s},\min\left\{2,\frac{N}{N-1-s}\right\}\Bigg).
\end{align*}

\subsubsection*{Step 5. Upper bound of $A_2^2$.} 
Using again \eqref{convolution_young}, this time with $q_1=1$, $q_2=q_3=p$ and the fact that $\eta$ has compact support, for a given $\alpha\in(2-2s,2)$ we can estimate
\begin{align}\label{z2_est_r}
	\norm{z_2(t)}{L^p(\RR^N)} &\leq\int_0^t\norm{G(t-\tau)\ast(\phi(\tau)\Delta\eta)}{L^p(\RR^N)}\,d\tau\notag\\
	 &\le \int_0^t\norm{D^{\alpha}G(t-\tau)\ast D^{-\alpha}(\phi(\tau)\Delta\eta)}{L^p(\RR^N)}\,d\tau	\nonumber
	\\
	&\leq \int_0^t\norm{D^{\alpha}G(t-\tau)}{L^1(\RR^N)}\norm{D^{-\alpha}(\phi(\tau)\Delta\eta)}{L^p(\RR^N)}\,d\tau \notag\\
	&\leq C\int_0^t (t-\tau)^{-\frac{\alpha}{2}}\norm{\phi(\tau)\Delta\eta}{W^{-\alpha,p}(\RR^N)}\,d\tau \nonumber
	\\
	&\leq Ct^{1-\frac{\alpha}{2}}\norm{u}{L^p(\Omega)}.
\end{align}
Hence
\begin{align*}
	A_2^2 \leq C\norm{u}{L^p(\Omega)}\int_1^{+\infty}\frac{dt}{t^{s+\frac{\alpha}{2}}} = C\norm{u}{L^p(\Omega)}.
\end{align*}

\subsubsection*{Step 6. Conclusion} 
Recollecting all the above estimates, we can finally conclude that there exists a constant $C>0$ such that \eqref{norm_est_r} holds. The proof of Lemma \ref{rem_lemma_p} is finished.
\end{proof}

\begin{remark}
Lemma \ref{rem_lemma_p} provides an alternative proof of the $L^p(\RR^N)$-regularity of the remainder term $g$ which appears in the formula for the fractional Laplacian of the product $\eta u$. However, in its proof, we are able to deal only with the case $p>2$ and $N\geq 2$ and $p>2$. When $N=1$ or $1<p<2$, instead, we encounter some difficulties that, at the present stage, we are not able to overcome. We will present these difficulties with more details in Section \ref{open-pb}, dedicated to open problems and perspectives. Nevertheless, we do not exclude that this regularity Lemma could be extended also to the case of one-space dimension.
\end{remark}

\section{Open problems and perspectives}\label{open-pb}

In the present paper we proved that weak solutions to the Dirichlet problem for the fractional Laplacian with a non-homogeneous right hand side $f\in L^p(\Omega)$ ($1<p<\infty$) belong to $W^{2s,p}_{\textrm{loc}}(\Omega)$. 

\noindent The following comments are worth considering.

\begin{enumerate}
\item In the proof of Lemma \ref{rem_lemma_p}, which provides the $L^p(\RR^N)$-regularity of the remainder term $g$ following the approach that employs the heat kernel characterization of the fractional Laplacian, we were not able to treat the cases $1<p<2$ and $N=1$. In more detail, we cannot encounter appropriate bounds for the terms $A_1^1$ and $A_1^2$ (see \eqref{z_split} for more details on the notation). These difficulties are most likely related to the fact that, in this lower dimension case or for lower values of $p$, there is less diffusion and the decay rates that we shall employ are slower. On the other hand, we believe that there has to be a way to solve this problem.

\item A natural interesting extension would  be the analysis  of the global elliptic regularity for weak solutions to  \eqref{DP}. The problem  is delicate however.

For the classical Dirichlet problem associated with the Laplace operator (the case $s=1$), it is well-known that if $\Omega$ is smooth, say of class $C^2$, then weak solutions to the associated problem belong to $W^{2,p}(\Omega)$. 

But, unfortunately, this maximal global elliptic regularity is not true for the fractional Laplacian. To be more precise, assume that $f\in L^p(\Omega)$ ($1<p<\infty$) and let $u$ be the associated weak solution to the Dirichlet problem \eqref{DP}. It is known that, if $p\ge 2$, then $u$ does not always belongs to $W^{2s,p}(\Omega)$ and, if $1<p<2$, then $u$ does not always belong to $B_{p,2}^{2s}(\Omega)$.

If this were the case, then for large $p$ and $\frac 12<s<1$, weak solutions would be at least $\beta$-H\"older  continuous up to the boundary of $\Omega$ of order $\beta>s$. One can see that the latter property is not true by applying the Pohozaev identity obtained in \cite{RS-PI} to the eigenfunctions of the Dirichlet fractional Laplacian. Indeed, let $\lambda_k>0$ be an eigenvalue of $A_D$ and $u_k$ the associated eigenfunction. Then, rewriting the identity in \cite[Proposition 1.6]{RS-PI} with $u_k$ by using the fact that $A_Du_k=\lambda_ku_k$, we get 
\begin{align}\label{poh}
\lambda_k\int_{\Omega}u_n(x\cdot\nabla u_k)\;dx=\frac{2s-N}{2}\lambda_k\int_{\Omega}u_k^2\;dx-\frac{\Gamma(s+1)^2}{2}\int_{\pOm}\left(\frac{u_k}{\rho^s}\right)^2\left(x\cdot\nu\right)\;d\sigma.
\end{align}

Integrating the term in the left-hand side of \eqref{poh} by parts and using that $u_k=0$ on $\pOm$, we get that
\begin{align}\label{poh-2}
s\lambda_k\int_{\Omega}u_k^2\;dx=\frac{\Gamma(s+1)^2}{2}\int_{\pOm}\left(\frac{u_k}{\rho^s}\right)^2\left(x\cdot\nu\right)\;d\sigma.
\end{align}

Now if $u_k$ were $\beta$-H\"older continuous up to the boundary $\pOm$ of order $\beta>s$, then since $0<s<1$ and $s\lambda_k>0$, it would follow from \eqref{poh-2} that $\int_{\Omega}u_k^2\;dx=0$. Thus $u_k=0$ on $\Omega$, which contradicts the fact that $u_k$ is an eigenfunction. We have shown that $u_k$ cannot be $\beta$-H\"older continuous up to the boundary $\pOm$ of order $\beta>s$.

A direct proof that $u_k$ cannot be Lipschitz continuous up to the boundary is also contained in \cite{Val} and the references therein, where it has been shown that the eigenfunctions are $s$-H\"older continuous up to the boundary and this regularity is optimal. Finally, a concrete example, valid for all $1<p<\infty$, has been given in \cite[Section 7]{RS-ES}.

\item It has been shown in \cite{ROS} that if $f\in L^\infty(\Omega)$ with $\Omega$ of class $C^2$ and $u$ is a weak solution of \eqref{DP}, then $u\in C^{0,s}(\RR^N)$ and the function $\rho^{-s}u$, where $\rho=\textrm{dist}(x,\partial\Omega)$ is the distance of a point $x$ to the boundary of the domain $\Omega$, belongs to $C^{0,\alpha}(\bOm)$ for some $0<\alpha<\min\{s,1-s\}$. In addition one has the following precise regularity.
\begin{itemize}
\item If $\Omega$ is of class $C^\infty$ and $f\in C^\infty(\bOm)$, then $\rho^{-s}u\in C^\infty(\bOm)$ (see e.g. \cite{ROS}).

\item If $\Omega$ is of class $C^{2,\beta}$ and $f\in C^\beta(\bOm)$, then $\rho^{-s}u\in C^{s+\beta}(\bOm)$ (see e.g. \cite{RS-FN}).
\end{itemize}

Roughly speaking, these results just mentioned tell us that, if the domain $\Omega$ is regular enough, the solution $u$ to \eqref{DP} can be seen as $u=\rho^s v$, where $v$ is a function regular up to the boundary. 

By part (b), weak solutions are in general not in $W^{2s,p}(\Omega)$, if $p\geq 2$, or in $B_{p,2}^{2s}(\Omega)$, if $1<p<2$. Nevertheless, compared with the above mentioned results, one could expect both $\rho^{-s}u$ and $\rho^{1-s}u$ to be smooth in the $L^p(\Omega)$ context, i.e. to belong to $B^{2s}_{p,2}$, if $1<p<2$, or to $W^{2s,p}(\Omega)$, if $p\ge 2$. 
In view of this, it would be natural to analyze whether this regularity property, which is not available in the literature, is actually true.
Finally, more generally, it is also interesting to investigate for which $\beta>0$ we have the same kind of regularity for the function $\rho^\beta u$. Following our approach we think that it is possible to show that, for every $\beta>s$, $\rho^\beta u$ belongs either to $B^{2s}_{p,2}$, if $1<p<2$, or to $W^{2s,p}(\Omega)$, if $p\ge 2$. However, the most interesting case is $0<\beta\le s$.  We mention that in this situation we have that $\rho^{\beta}u$ is also a solution of a certain Dirichlet problem.
\end{enumerate}

{\appendix\section{}
\label{appendix}
For the sake of completeness,  we introduce some well-known facts about the fractional order Sobolev spaces, which are not so familiar as the classical integral order Sobolev spaces. Let $\Omega\subset\RR^N$ be an arbitrary open set. For $p\in \lbrack
1,\infty )$ and $s\in (0,1)$, we denote by
\begin{equation*}
W^{s,p}(\Omega ):=\left\{ u\in L^{p}(\Omega):\;\int_{\Omega}\int_{\Omega }
\frac{|u(x)-u(y)|^{p}}{|x-y|^{N+ps}}dxdy<\infty \right\},
\end{equation*}
the fractional order Sobolev space endowed with the norm
\begin{equation*}
\Vert u\Vert _{W^{s,p}(\Omega )}:=\left( \int_{\Omega }|u|^{p}\;dx+\int_{\Omega }\int_{\Omega }\frac{|u(x)-u(y)|^{p}}{|x-y|^{N+ps}}
dxdy\right) ^{\frac{1}{p}}.
\end{equation*}
We set
\begin{align*}
W_0^{s,p}(\Omega):=\overline{\mathcal D(\Omega)}^{\;W^{s,p}(\Omega)},
\end{align*}
where $\mathcal D(\Omega)$ is the space of all continuously infinitely differentiable functions with compact support in $\Omega$.

\noindent The following result is taken from \cite[Theorem 1.4.2.4 p.25]{Gris}.
\begin{theorem}\label{theo-Gris}
Let $\Omega\subset\RR^N$ be a bounded open set with Lipschitz continuous boundary and $1<p<\infty$. Then for every $0<s\le \frac 1p$, we have that $W^{s,p}(\Omega)=W_0^{s,p}(\Omega)$ with equivalent norm.
\end{theorem}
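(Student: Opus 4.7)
The plan is to reduce the claim to an approximation statement for smooth functions and then implement a cut-off near $\partial\Omega$ whose error vanishes precisely because $sp\le 1$. First I would invoke a standard density result for Lipschitz domains: $C^\infty(\bOm)\cap W^{s,p}(\Omega)$ is dense in $W^{s,p}(\Omega)$, via an extension operator for Lipschitz $\Omega$ followed by convolution with a mollifier. Since $W_0^{s,p}(\Omega)$ is by definition the $W^{s,p}$-closure of $\mathcal D(\Omega)$, it then suffices to show that every $u\in C^\infty(\bOm)$ can be approximated in the $W^{s,p}(\Omega)$ norm by functions in $\mathcal D(\Omega)$.

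To this end I would fix a family of cut-offs $\eta_\vep\in\mathcal D(\Omega)$ with $0\le\eta_\vep\le 1$, $\eta_\vep\equiv 1$ on $\{x\in\Omega:\rho(x)\ge 2\vep\}$, $\eta_\vep\equiv 0$ on $\{x\in\Omega:\rho(x)\le\vep\}$, and $|\nabla\eta_\vep|\le C/\vep$, where $\rho(x):=\textrm{dist}(x,\pOm)$. The Lipschitz character of $\pOm$ allows such a construction via a tubular neighborhood. I would then prove $u\eta_\vep\to u$ in $W^{s,p}(\Omega)$ as $\vep\downarrow 0$. The $L^p$ convergence is immediate from $|u(1-\eta_\vep)|\le|u|\chi_{\{\rho\le 2\vep\}}$ and dominated convergence. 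For the Gagliardo seminorm I would use the pointwise bound
\begin{align*}
|u(x)(1-\eta_\vep(x))-u(y)(1-\eta_\vep(y))|\le |u(x)-u(y)|(1-\eta_\vep(x))+|u(y)||\eta_\vep(x)-\eta_\vep(y)|
\end{align*}
and split the double integral accordingly. The first contribution tends to zero by dominated convergence since $(1-\eta_\vep(x))\to 0$ pointwise and the integrand is dominated by the Gagliardo seminorm integrand of $u$.

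The delicate contribution is the second one, which after the change of variables $y\mapsto x+h$ amounts to controlling $\int_\Omega |u(y)|^p\big(\int_\Omega |\eta_\vep(x)-\eta_\vep(y)|^p |x-y|^{-N-sp}dx\big)\,dy$. Using the Lipschitz estimate $|\eta_\vep(x)-\eta_\vep(y)|\le \min(1,|x-y|/\vep)$, the inner integral is bounded by $C\vep^{-sp}$ uniformly for $y$ in the $3\vep$-strip $\{\rho<3\vep\}$ and decays like $C\rho(y)^{-sp}$ away from the strip. Everything therefore reduces to the Hardy-type inequality
\begin{align*}
\int_\Omega\frac{|u(y)|^p}{\rho(y)^{sp}}\,dy\le C\|u\|_{W^{s,p}(\Omega)}^p,
\end{align*}
valid on Lipschitz domains precisely when $sp<1$; its borderline analog for $sp=1$ involves a logarithmic factor $\log(1/\rho(y))$ that is compensated by the localization to a thin strip. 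Combining this Hardy estimate with the $3\vep$-strip localization yields vanishing of the second contribution as $\vep\downarrow 0$.

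The hard part will be the borderline case $s=1/p$: the Hardy inequality as stated fails and one must use the logarithmic Hardy inequality $\int_\Omega |u(y)|^p[\rho(y)\log(2\textrm{diam}(\Omega)/\rho(y))]^{-1}dy\le C\|u\|_{W^{1/p,p}(\Omega)}^p$, and check that the contribution $\vep^{-1}\int_{\{\rho<3\vep\}}|u|^p$ balances against the logarithmic factor obtained from the inner $x$-integral, which produces $\log(1/\vep)$. Both factors match in the $sp=1$ case and the combined bound still tends to zero. Finally, equivalence of norms follows from the resulting continuous (and bijective) inclusion $W_0^{s,p}(\Omega)\hookrightarrow W^{s,p}(\Omega)$ together with the closed graph theorem, or directly from the uniform estimates produced along the approximation scheme.
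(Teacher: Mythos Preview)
The paper does not prove this statement; it simply quotes it as \cite[Theorem 1.4.2.4, p.~25]{Gris} and moves on, so there is nothing in the paper to compare your argument against. Your outline is the classical one and is essentially what one finds in Grisvard or Lions--Magenes.

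For the subcritical range $sp<1$ your scheme is sound, but one step is phrased too loosely. You say the inner integral ``decays like $C\rho(y)^{-sp}$ away from the strip'' and then that ``combining this Hardy estimate with the $3\vep$-strip localization yields vanishing.'' The bound $C\rho(y)^{-sp}$ by itself is only a \emph{uniform} dominating function; it does not tend to zero. What makes the argument work is that for each fixed $y$ with $\rho(y)>0$ the inner integral actually tends to $0$ (since the region $\{\rho(x)<2\vep\}$ shrinks to $\pOm$), and then dominated convergence with majorant $|u(y)|^p\rho(y)^{-sp}\in L^1(\Omega)$ (by Hardy, or simply because $u$ is bounded and $\rho^{-sp}\in L^1$ when $sp<1$) closes the estimate. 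You should say this explicitly.

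The borderline case $s=1/p$ is where your sketch has a real gap. With the linear cut-off you describe, the inner $x$-integral for $y$ in the strip is of order $\vep^{-1}$, not $\log(1/\vep)$; the ``logarithmic factor from the inner $x$-integral'' you invoke does not arise. A direct computation shows $[(1-\eta_\vep)u]_{W^{1/p,p}}^p=O(1)$, so $u\eta_\vep$ does \emph{not} converge to $u$ in norm. Two standard remedies: (i) replace $\eta_\vep$ by a logarithmically graded cut-off, constant outside $\{\vep<\rho<\vep^{1/2}\}$ and with $|\nabla\eta_\vep|\le C(\rho\,|\log\vep|)^{-1}$ there, which produces the missing $|\log\vep|^{-1}$ factor; or (ii) keep your cut-off, note that $(u\eta_\vep)_\vep$ is \emph{bounded} in $W^{1/p,p}(\Omega)$ and converges to $u$ in $L^p$, then use reflexivity and the fact that $W_0^{1/p,p}(\Omega)$ is a closed (hence weakly closed) subspace to conclude $u\in W_0^{1/p,p}(\Omega)$ via Mazur's lemma. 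Either route repairs the endpoint; your logarithmic-Hardy balancing as written does not.
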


It is well-known (see e.g. \cite{NPV,Gris}) that if $\Omega\subset\RR^N$ is a bounded open set with a Lipschitz continuous boundary then
\begin{align}\label{sob-emb1}
W^{s,p}(\Omega)\hookrightarrow L^{q}(\Omega) \;\mbox{ with }\;\;\left\{\begin{array}{ll}
	1\le q\le\frac{Np}{N-sp} &\;\mbox{ if }\;N>sp,
	\\
	1\le q<\infty & \;\mbox{ if }\; N=sp.
\end{array} \right.
\end{align}
If $N<sp$, then
\begin{align}\label{sob-emb2}
W^{s,p}(\Omega)\hookrightarrow C^{0,s-\frac Np}(\bOm).
\end{align}
Next, for $1<p<\infty$ and $0<s<1$ we define
\begin{align*}
W_0^{s,p}(\bOm):=\Big\{u\in W^{s,p}(\RR^N):\; u=0\;\mbox{ on }\;\RR^N\setminus\Omega\Big\}.
\end{align*}

It has been shown in \cite[Lemma 6.1]{NPV} that for an arbitrary bounded open set $\Omega\subset\RR^N$, there exists a constant $C>0$ such that
\begin{align}\label{bound-dist}
\int_{\RR^N\setminus\Omega}\frac{dy}{|x-y|^{N+sp}}\ge C|\Omega|^{-\frac{sp}{N}}.
\end{align}
Using \eqref{bound-dist} we get that there exists a constant $C>0$ such that for every $u\in W_0^{s,p}(\bOm)$,
\begin{align}\label{Nor-1}
\int_{\RR^N}|u|^p\;dx=&\int_{\Omega}|u|^p\;dx\le C\int_{\RR^N}|u(x)|^p\int_{\RR^N\setminus\Omega}\frac{dy}{|x-y|^{N+sp}}\notag\\\le& C \int_{\RR^N }\int_{\RR^N }\frac{|u(x)-u(y)|^{p}}{|x-y|^{N+ps}}dxdy.
\end{align}
It follows from \eqref{Nor-1} that for every $1<p<\infty$ and $0<s<1$, 
\begin{align}\label{equi-nrom}
\|u\|_{W_0^{s,p}(\bOm)}=\left(\int_{\RR^N }\int_{\RR^N }\frac{|u(x)-u(y)|^{p}}{|x-y|^{N+ps}}dxdy\right) ^{\frac{1}{p}},
\end{align}
defines an equivalent norm on $W_0^{s,p}(\bOm)$. We shall denote by $W^{-s,p'}(\bOm)$ the dual of the reflexive Banach space $W_0^{s,p}(\bOm)$, that is,
\begin{align*}
W^{-s,p'}(\bOm):=(W_0^{s,p}(\bOm))^\star\;\;\mbox{ where }\; p':=\frac{p}{p-1}.
\end{align*}

We remark that there is no obvious inclusion between $W_0^{s,p}(\Omega)$ and $W_0^{s,p}(\bOm)$. In fact, for an arbitrary bounded open set $\Omega\subset\RR^N$, the two spaces are different, since $\mathcal D(\Omega)$ is not always dense in $W_0^{s,p}(\bOm)$ (see e.g. \cite{Val-D}). But if $\Omega$ has a continuous boundary, then by \cite[Theorem 6]{Val-D}, $\mathcal D(\Omega)$ is dense in $W_0^{s,p}(\bOm)$ and in addition we have that
\begin{align}\label{eq-sp}
W_0^{s,p}(\bOm)=W_0^{s,p}(\Omega)\;\mbox{ for every }\;\frac 1p<s<1.
\end{align}

In fact, \eqref{eq-sp} follows by using the Hardy inequality for fractional order Sobolev spaces and the following estimate (see e.g. \cite[Formula (1.3.2.12)]{Gris}): there exist two constants $0<C_1\le C_2$ such that
\begin{align}\label{pro-dist}
\frac{C_1}{(\rho(x))^{ps}}\le\int_{\RR^N\setminus\Omega}\frac{dy}{|x-y|^{N+sp}}\le \frac{C_2}{(\rho(x))^{ps}},\;\;\;x\in\Omega.
\end{align}
where $\rho(x):=\mbox{dist}(x,\pOm),\;\;x\in\Omega.$

We also notice that the continuous embeddings \eqref{sob-emb1} and \eqref{sob-emb2} hold with $W^{s,p}(\Omega)$ replaced with $W_0^{s,p}(\Omega)$ or $W_0^{s,p}(\bOm)$ and this case without any regularity assumption on the open set $\Omega$.

Next, if $s>1$ and is not an integer, then we write $s=m+\sigma$ where $m$ is an integer and $0<\sigma<1$. In this case
\begin{align*}
W^{s,p}(\Omega):=\Big\{u\in W^{m,p}(\Omega):\; D^\alpha u\in W^{\sigma,p}(\Omega)\;\mbox{ for any }\;\alpha\;\mbox{ such that }\;|\alpha|=m\Big\}.
\end{align*}
Then $W^{s,p}(\Omega)$ is a Banach space with respect to the norm
\begin{align*}
\|u\|_{W^{s,p}(\Omega)}:=\left(\|u\|_{W^{m,p}(\Omega)}^p+\sum_{|\alpha|=m}\|D^\alpha u\|_{W^{\sigma,p}(\Omega)}^p\right)^{\frac 1p}.
\end{align*}

If $s=m$ is an integer, then $W^{s,p}(\Omega)$ coincides with the Sobolev space $W^{m,p}(\Omega)$.  Compare with \eqref{sob-emb1} we have the following general embedding.

\begin{theorem}\label{theo-sob-emb}
Let $\Omega\subset\RR^N$ be a bounded open set with Lipschitz continuous boundary. Then the following assertions hold.
\begin{enumerate}
\item  If $0<s\le r$ and $1<p\le q<\infty$ are real numbers such that $r-\frac Np= s-\frac Nq$, then $W^{r,p}(\RR^N)\hookrightarrow W^{s,q}(\RR^N)$.
 
 \item  If $0<s\le r$ and $1<p\le q<\infty$ are real numbers such that $r-\frac Np\ge s-\frac Nq$, then $W^{r,p}(\Omega)\hookrightarrow W^{s,q}(\Omega)$.
 \end{enumerate}
\end{theorem}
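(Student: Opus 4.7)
The plan is to treat part (a) first, as the critical-scaling embedding on $\RR^N$, and to deduce part (b) from it by extension. For (a), I would use the identification (for $r$ non-integer and $1<p<\infty$) of $W^{r,p}(\RR^N)$ with the Besov space $B^r_{p,p}(\RR^N)$ and, in the Hilbert case $p=2$, with the Bessel potential space $\mathscr{L}^p_r(\RR^N)$; for integer $r$ one recovers the classical Sobolev space. Under the critical relation $r-N/p=s-N/q$, the embedding $W^{r,p}(\RR^N)\hookrightarrow W^{s,q}(\RR^N)$ reduces to the boundedness of the Riesz potential $I_{r-s}:L^p(\RR^N)\to L^q(\RR^N)$, a classical Hardy-Littlewood-Sobolev statement, combined with the standard characterization $\|u\|_{W^{s,q}(\RR^N)}\simeq \|(I-\Delta)^{s/2}u\|_{L^q(\RR^N)}$. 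All these ingredients are contained in \cite[Chapter V]{STEIN}.

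For (b), I would invoke Stein's universal extension theorem for Lipschitz bounded domains, which provides a bounded linear operator $E:W^{r,p}(\Omega)\to W^{r,p}(\RR^N)$ with $(Eu)|_\Omega=u$ and operator norm controlled independently of $u$. Given $u\in W^{r,p}(\Omega)$, define $q^*\ge q$ by the critical equality $r-N/p=s-N/q^*$; this value exists and satisfies $q^*\ge q$ precisely because of the hypothesis $r-N/p\ge s-N/q$. Applying part (a) to $Eu$ yields $Eu\in W^{s,q^*}(\RR^N)$, hence $u\in W^{s,q^*}(\Omega)$ by restriction. It then remains to show that on the bounded Lipschitz domain $\Omega$ one has the monotonicity embedding $W^{s,q^*}(\Omega)\hookrightarrow W^{s,q}(\Omega)$ whenever $q^*\ge q$.

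The main obstacle I anticipate is this last embedding. The inclusion $L^{q^*}(\Omega)\subset L^q(\Omega)$ is immediate from $|\Omega|<\infty$ and H\"older's inequality, but the Gagliardo seminorm requires more care: a direct application of H\"older to the double integral $\int_\Omega\int_\Omega |u(x)-u(y)|^q|x-y|^{-N-sq}\,dxdy$ leads to an auxiliary integral $\int_\Omega\int_\Omega|x-y|^{-N}\,dxdy$, which is logarithmically divergent. The standard fix is to split the domain of integration into $\{|x-y|\le 1\}$ and its complement: on the complement the kernel is bounded and the estimate reduces to the $L^q$ norm, while on the near-diagonal region one applies H\"older with a better, non-critical exponent in the kernel made possible by the strict inequality $q<q^*$. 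Alternatively one can interpolate between $L^q(\Omega)$ and $W^{s,q^*}(\Omega)$ by real interpolation, or simply invoke the general monotonicity of Besov spaces on Lipschitz domains from Triebel's treatise.
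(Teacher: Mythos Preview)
The paper does not actually prove this theorem: it is stated in the Appendix as a known result, with pointers to the standard references \cite{AH,NPV,Gris,JW}. So there is no proof in the paper to compare your proposal against; I assess the proposal on its own merits.

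Your overall architecture---prove the critical embedding on $\RR^N$ first, then transfer to $\Omega$ via Stein's extension operator---is the standard one and is sound. There are, however, two genuine issues.

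First, in part (a) you invoke the characterization $\|u\|_{W^{s,q}(\RR^N)}\simeq \|(I-\Delta)^{s/2}u\|_{L^q(\RR^N)}$. This is the norm on the Bessel potential space $\mathscr{L}^q_s(\RR^N)=H^{s,q}(\RR^N)=F^s_{q,2}(\RR^N)$, and it does \emph{not} coincide with the Sobolev--Slobodeckij space $W^{s,q}(\RR^N)=B^s_{q,q}(\RR^N)$ when $s$ is non-integer and $q\ne 2$ (the paper uses the Gagliardo definition, so $W^{s,q}=B^s_{q,q}$). The Riesz/Bessel potential route you describe proves $H^{r,p}\hookrightarrow H^{s,q}$ cleanly, but to obtain $B^r_{p,p}\hookrightarrow B^s_{q,q}$ you should cite the Besov-scale Sobolev embedding directly (e.g.\ \cite[Theorem~2.7.1]{TRIEB}), or deduce it from the Bessel embedding via real interpolation. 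As written, your argument conflates the two scales.

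Second, in part (b) your critical exponent $q^\star$, defined by $r-N/p=s-N/q^\star$, satisfies $1/q^\star=1/p-(r-s)/N$; this is positive only when $r-s<N/p$. If $r-s\ge N/p$, which the hypotheses do not exclude, no finite $q^\star$ exists and your reduction to part (a) fails. The fix is routine---first lower the regularity on $\RR^N$ from $r$ to some $r'\in(s,r)$ with $r'-s<N/p$ (the embedding $W^{r,p}(\RR^N)\hookrightarrow W^{r',p}(\RR^N)$ is immediate on the Besov scale), then apply the critical step---but it should be stated.

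Your discussion of the monotonicity $W^{s,q^\star}(\Omega)\hookrightarrow W^{s,q}(\Omega)$ for $q^\star\ge q$ is accurate: the near-diagonal/far-diagonal splitting you outline works, and so does the appeal to \cite{TRIEB}.
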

For more information on fractional order Sobolev spaces, we refer to \cite{AH,NPV,Gris,JW} and the references therein.

We also recall the following definition of the Besov space $B^{s}_{p,q}$, according to \cite[Chapter V, Section 5.1, Formula (60)]{STEIN}.
\begin{align}\label{besov-def}
	B^{s}_{p,q}(\RR^N) :=\left\{u\in L^p(\RR^N):\; \left(\int_{\RR^N}\frac{\norm{u(x+y)-u(y)}{L^p(\RR^N)}^q}{|y|^{N+qs}}\,dy\right)^{\frac{1}{q}}<\infty \right\}, \;\;\;1\le p,q\le\infty,\;\; 0<s<1.
\end{align}  

Notice that, when $p=q$, we have $B^{s}_{p,p}(\RR^N) = W^{s,p}(\RR^N)$. Finally, we recall the definition of the following potential space 
\begin{align}\label{sp-stein}
	\mathscr{L}^p_{2s}(\RR^N):=\Big\{u\in L^p(\RR^N):\;  \fl{s}{u}\in L^p(\RR^N)\Big\},\;\;\;1\le p\le\infty, \;\;s\ge 0,
\end{align}
introduced, for example, in \cite[Chapter V, Section 3.3, Formula (38)]{STEIN}. Note that this same space is sometimes denoted as $H^s_p(\RR^N)$ (see, e.g., \cite[Section 1.3.2]{TRIEB}). 
Here we  adopt the notation $\mathscr{L}^p_{2s}(\RR^N)$.

Finally, for the proof of our results, we will also need the following estimate. Let $A\subset\RR^N$ be a bounded set and $B\subset\RR^N$ an arbitrary set. Then there exists a constant $C>0$ (depending on $A$ and $B$) such that
\begin{align}\label{ine-dist}
|x-y|\ge C(1+|y|),\;\;\forall\;x\in A,\;\forall\;y\in \RR^N\setminus B,\;\mbox{dist}(A,\RR^N\setminus B)=\delta>0.
\end{align}
}

\textbf{Acknowledgments}:

\begin{itemize}
\item The work of Umberto Biccari was partially supported by the Advanced Grant DYCON (Dynamic Control) of the European Research Council Executive Agency, by the MTM2014-52347 Grant of the MINECO (Spain) and by the Air Force Office of Scientific Research under the Award No: FA9550-15-1-0027.

\item The work of Mahamadi Warma was partially supported by the Air Force Office of Scientific Research under the Award No: FA9550-15-1-0027. 

\item The work of Enrique Zuazua was partially supported by the Advanced Grant DYCON (Dynamic Control) of the European Research Council Executive Agency, FA9550-15-1-0027 of AFOSR, FA9550-14-1-0214 of the EOARD-AFOSR, the MTM2014-52347 Grant of the MINECO (Spain) and ICON of the French ANR.
\end{itemize}


\begin{thebibliography}{99}

\bibitem{AH} 
\newblock D.R.~Adams and L.I.~Hedberg.
\newblock{Function Spaces and Potential Theory}. 
\newblock Grundlehren der Mathematischen Wissenschaften \textbf{314}. Springer-Verlag, Berlin, 1996.

\bibitem{TURB}
\newblock O.G.~Bakunin. 
\newblock{Turbulence and diffusion: scaling versus equations}. 
\newblock Springer Science \& Business Media, 2008.

\bibitem{biccari}
\newblock U.~Biccari. 
\newblock\emph{Internal control for non-local Schr\"odinger and wave equations involving the fractional Laplace operator}. 
\newblock Preprint, {\tt arxiv.org/abs/1411.7800v2}.

\bibitem{fl_parabolic}
\newblock U.~Biccari, M.~Warma and E.~Zuazua,
\newblock\emph{Local regularity for fractional heat equations}.
\newblock Preprint, {\tt arxiv.org/abs/1704.07562}.

\bibitem{BTG}
\newblock M.~Bologna, C.~Tsallis and P.~Grigolini. 
\newblock\emph{Anomalous diffusion associated with non-linear fractional derivative Fokker-Planck-like equation: Exact time-dependent solutions}. 
\newblock Phys. Rev. \textbf{E 62} (2000), 2213--2218.

\bibitem{cozzi}
\newblock M.~Cozzi.
\newblock\emph{Interior regularity of solutions of non-local equations in Sobolev and Nikol'skii spaces}.
\newblock Ann. Mat. Pura Appl. \textbf{(2) 196} (2017), 555-578. 

\bibitem{NPV}
\newblock E.~Di Nezza, G.~Palatucci and E.~Valdinoci.
\newblock\emph{Hitchhiker's guide to the fractional Sobolev spaces}. 
\newblock Bull. Sci. Math. \textbf{136} (2012), 521--573. 

\bibitem{DPV} 
\newblock S.~Dipierro, G.~Palatucci and E.~Valdinoci.
\newblock\emph{Dislocation dynamics in crystals: a macroscopic theory in a fractional Laplace setting}. 
\newblock Commun. Math. Phys. \textbf{(2) 333} (2015), 1061--1105.

\bibitem{DZ} 
\newblock J.~Duoandikoetxea and E.~Zuazua.
\newblock\emph{Moments, masses de Dirac et d{\'e}composition de fonctions}. 
\newblock C. R. Acad. Sci. Paris, S{\'e}rie 1, Math{\'e}matique \textbf{(6) 315} (1992), 693--698.

\bibitem{EN}
\newblock  K-J. Engel and R. Nagel.
\newblock One-parameter Semigroups for Linear Evolution Equations. 
\newblock  Graduate Texts in Mathematics, \textbf{194}. Springer-Verlag, New York, 2000.

\bibitem{Val-D}
\newblock A. Fiscella, R. Servadei and E. Valdinoci.
\newblock\emph{Density properties for fractional Sobolev spaces}. 
\newblock Ann. Acad. Sci. Fenn. Math. \textbf{40} (2015), 235–-253. 

\bibitem{folland} 
\newblock G.B.~Folland.
\newblock{Real Analysis: Modern Techniques and their Applications}. 
\newblock John Wiley \& Sons, 2013.

\bibitem{GW-CPDE}
\newblock  C.G. Gal and M. Warma.
\newblock\emph{Nonlocal transmission problems with fractional diffusion and boundary conditions on non-smooth interfaces}.
\newblock Comm. Partial Differential Equations, to appear.

\bibitem{GO} 
\newblock G.~Gilboa and S.~Osher. 
\newblock\emph{Nonlocal operators with applications to image processing}.
\newblock Multiscale Model. Simul. \textbf{(3) 7} (2008), 1005--1028.

\bibitem{Gris} 
\newblock P.~Grisvard.
\newblock{Elliptic Problems in Nonsmooth Domains}. 
\newblock Monographs and Studies in Mathematics, \textbf{24}. Pitman, Boston, MA, 1985.

\bibitem{grubb} 
\newblock G.~Grubb.
\newblock\emph{Fractional Laplacians on domains, a development of Hörmander’s theory of $\mu$-transmission pseudodifferential operators}. 
\newblock Adv. Math. \textbf{268} (2015), 478-528.

\bibitem{JW} 
\newblock A.~Jonsson and H.~Wallin.
\newblock{Function Spaces on Subsets of $\mathbb R^N$}. 
\newblock Math. Rep. \textbf{2} (1984).

\bibitem{kato}
\newblock T.~Kato.
\newblock\emph{Strong $L^p$ solutions of the Navier-Stokes equation in $\RR^m$, with applications to weak solutions}.
\newblock Math.  Z. \textbf{(4) 187} (1984), 471--480.

\bibitem{LEVI} 
\newblock S.~Levendorski. 
\newblock\emph{Pricing of the American put under L\'evy processes}. 
\newblock Int. J. Theor. Appl. Finance \textbf{(03) 7} (2004), 303--335.

\bibitem{LPPS}
\newblock T.~Leonori, I.~Peral, A.~Primo and F.~Soria.
\newblock\emph{Basic estimates for solutions of a class of nonlocal elliptic and parabolic equations}. 
\newblock Discrete Contin. Dyn. Syst. \textbf{35} (2015), 6031--6068.

\bibitem{AN_DIFF}
\newblock M.M.~Meerschaert. 
\newblock\emph{Fractional calculus, anomalous diffusion, and probability}. 
\newblock Fractional dynamics, 265–-284, World Sci. Publ., Hackensack, NJ, 2012.

\bibitem{MOSER}
\newblock J.~Moser. 
\newblock\emph{A new proof of de Giorgi's theorem concerning the regularity problem for elliptic differential equations}. 
\newblock Comm. Pure Appl. Math. \textbf{13} (1960), 457--468.

\bibitem{PHAM} 
\newblock H.~Pham. 
\newblock\emph{Optimal stopping, free boundary, and American option in a jump-diffusion
model}. 
\newblock Appl. Math. Optim. \textbf{(2) 35} (1997), 145--164.

\bibitem{RS-PI}
\newblock X.~Ros-Oton and J.~Serra.
\newblock\emph{The Pohozaev identity for the fractional Laplacian}. 
\newblock Arch. Ration. Mech. Anal. \textbf{213} (2014), 587-–628. 

\bibitem{ROS}
\newblock X.~Ros-Oton and J.~Serra.
\newblock\emph{The Dirichlet problem for the fractional Laplacian: regularity up to the boundary}.
\newblock J. Math. Pures Appl. \textbf{(9) 101} (2014), 275--302.

\bibitem{RS-ES}
\newblock X.~Ros-Oton and J.~Serra.
\newblock\emph{The extremal solution for the fractional Laplacian}. 
\newblock Calc. Var. Partial Differential Equations \textbf{50} (2014), 723–-750. 

\bibitem{RS-FN}
\newblock X.~Ros-Oton and J.~Serra.
\newblock\emph{Boundary regularity for fully nonlinear integro-differential equations}.
\newblock Duke Math. J. \textbf{165} (2016), 2079–-2154. 

\bibitem{Val}
\newblock R. Servadei and E. Valdinoci.
\newblock\emph{On the spectrum of two different fractional operators}. 
\newblock Proc. Roy. Soc. Edinburgh \textbf{ Sect. A 144} (2014), 831–-855. 

\bibitem{STEIN}
\newblock E.~Stein 
\newblock\emph{Singular Integrals and Differentiability Properties of Functions}.
\newblock Princeton Mathematical Series, 1970.

\bibitem{stinga}
\newblock P.R.~Stinga 
\newblock\emph{Fractional powers of second order partial differential operators: extension problem and regularity theory}.
\newblock Ph. D Dissertation, Universidad Aut\'onoma de Madrid, Spain, 2010. 

\bibitem{tartar}
\newblock L.~Tartar.
\newblock\emph{An Introduction to Sobolev Spaces and Interpolation Spaces}.
\newblock Springer Science \& Business Media, 2007.

\bibitem{taylor}
\newblock M.E.~Taylor.
\newblock{Pseudodifferential Operators}.
\newblock Princeton Mathematical Series, Vol. 4, 1981.

\bibitem{TRIEB} 
\newblock H.~Triebel.
\newblock{Theory of Function Spaces II}. 
\newblock Monographs in Mathematics, \textbf{84}. Birkh\"auser Verlag, Basel, Boston, Berlin, 1992.

\bibitem{VAZ}
\newblock J.L.~V\'azquez.
\newblock\emph{Nonlinear diffusion with fractional Laplacian operators}.
\newblock Nonlinear partial differential equations, 271–-298, Proc. Abel Symp., \textbf{7}, Springer, Heidelberg, 2012.  
  
\bibitem{WAVE}
\newblock T.~Zhu and J.M.~Harris. 
\newblock\emph{Modeling acoustic wave propagation in heterogeneous attenuating media using decoupled fractional Laplacians}. 
\newblock Geophysics \textbf{(3) 79} (2014), T105--T116.

\end{thebibliography}
\end{document}